\pgfplotsset{compat=newest,scaled y ticks=true} 
\DeclareMathOperator*{\argmin}{arg\,min}
\theoremstyle{definition}
\newcommand{\colref}[2]{\hyperref[#2]{#1~\ref*{#2}}}
\newcommand{\remref}[1]{\colref{Remark}{#1}}
\newcommand{\figref}[1]{\colref{Figure}{#1}}
\newcommand{\secref}[1]{\colref{Section}{#1}}
\newcommand{\lemref}[1]{\colref{Lemma}{#1}}
\newcommand{\thmref}[1]{\colref{Theorem}{#1}}
\newcommand{\cororef}[1]{\colref{Corollary}{#1}}
\newtheorem{theorem}{\textbf{Theorem}}[section]
\newtheorem{corollary}{Corollary}[theorem]
\newtheorem{lemma}[theorem]{\textbf{Lemma}}
\newtheorem{remark}{\textbf{Remark}}
\newcommand{\email}[1]{\href{mailto:#1}{\nolinkurl{#1}}}
\newcommand{\uh}{u_h}
\newcommand{\vh}{v_h}
\newcommand{\er}{e}
\newcommand{\eh}{e_h}
\newcommand{\ei}{\hat{e}}
\newcommand{\spaceV}{V}
\newcommand{\spaceVh}{V_h}
\newcommand{\spaceVd}{V^D}
\newcommand{\spaceVhd}{V^D_h}
\newcommand{\spaceVdiv}{\mvec{V}_{\text{div}}}
\newcommand{\interpolant}{I_h}
\newcommand{\interp}[1]{\interpolant #1}
\newcommand{\nsd}{d}
\newcommand{\nsdt}{\hat{d}}
\newcommand{\xvec}{\mvec{x}}
\newcommand{\adv}{\mvec{a}}
\newcommand{\advst}{\widetilde{\mvec{a}}}
\newcommand{\bit}{\Gamma_{0}}
\newcommand{\bft}{\Gamma_{T}}
\newcommand{\bsp}{\Gamma_s}
\newcommand{\mesh}{K_h}
\newcommand{\edgeset}{\mathcal{E}_h}
\newcommand{\grad}{\nabla}
\newcommand{\gradst}{\widetilde{\nabla}}
\newcommand{\divergence}{\grad\cdot}
\newcommand{\laplacian}{\Delta}
\newcommand{\ellipticA}{K}
\newcommand{\ellipticAmat}{\mathbold{\ellipticA}}
\newcommand{\ellipticB}{A}
\newcommand{\ellipticBvec}{\mathbold{\ellipticB}}
\newcommand{\bform}{b}
\newcommand{\bformh}{\bform_h}
\newcommand{\lform}{l}
\newcommand{\lformh}{\lform_h}
\newcommand{\glspar}{\epsilon}
\newcommand{\glsparMax}{\glspar_{max}}
\newcommand{\normalvec}{\hat{\mathbold{n}}}
\newcommand{\mvec}[1]{{\mathbold{#1}}}
\newcommand{\mmat}[1]{{\mathbold{#1}}}
\newcommand{\intTimeSpace}{\int_0^T\int_{\spDom}}
\newcommand{\intSpaceTime}{\int_{\spDom}\int_0^T}
\newcommand{\intSpace}{\int_{\spDom}}
\newcommand{\intTime}{\int_0^T}
\newcommand{\modulus}[1]{| #1 |}
\newcommand{\inner}[2]{\left( #1, #2 \right)}
\newcommand{\innerh}[2]{\left( #1, #2 \right)_h}
\newcommand{\norm}[1]{\left\| #1 \right\|}
\newcommand{\normh}[1]{\left\| #1 \right\|_h}
\newcommand{\normL}[3][2]{\| #2 \|_{L^{#1}(#3)}}
\newcommand{\normH}[3][1]{\| #2 \|_{H^{#1}(#3)}}
\newcommand{\seminormH}[3][1]{| #2 |_{H^{#1}(#3)}}
\newcommand{\residual}[1]{\partial_t #1 + \adiv #1 - \visco \laplacian #1}
\newcommand{\adivst}{\advst \cdot \gradst}
\newcommand{\normVExpanded}[1]{\norm{#1}_{\bft}^2 + \visco \norm{\grad #1}^2 + \normh{\glspar^{\halfnice}[\residual{#1}]}^2}
\newcommand{\normFT}[1]{\left\| #1 \right\|_{\bft}}
\newcommand{\oneOver}[1]{\frac{1}{#1}}
\newcommand{\approxConstant}{C_{a}}
\newcommand{\shapeRegularConst}{C_h}
\newcommand{\poincareConstant}{C_p}
\newcommand{\temporalInterval}{I_T}
\newcommand{\spDom}{\Omega} 
\newcommand{\stDom}{U} 
\newcommand{\ut}{\partial_t u}
\newcommand{\uht}{\partial_t \uh}
\newcommand{\vht}{\partial_t \vh}
\newcommand{\opL}{L}
\newcommand{\opLT}{M}
\newcommand{\visco}{\nu}
\newcommand{\adiv}{(\adv\cdot\grad)}
\newcommand{\ddiv}[1]{#1\cdot\grad}
\newcommand{\half}{\frac{1}{2}}
\newcommand{\halfnice}{\nicefrac{1}{2}}
\newcommand{\elmsum}{\Xsum_{K \in \mesh}}
\newcommand{\edgesum}{\Xsum_{E \in \edgeset}}
\newcommand{\normElm}[1]{\left\| #1 \right\|_K}
\newcommand{\he}{h_K}
\newcommand{\jump}{\mathbb{J}}
\newcommand{\etaFull}{\eta_{_\stDom}}
\newcommand{\etaElm}{\eta_K}
\newcommand{\nrefine}{N_{ref}}
\newcommand{\vertiii}[1]{{\left\vert\kern-0.25ex\left\vert\kern-0.25ex\left\vert #1 \right\vert\kern-0.25ex\right\vert\kern-0.25ex\right\vert}}
\newcommand{\normVh}[1]{\vertiii{#1}}
\newcommand{\normVhstar}[1]{\vertiii{#1}_*}
\newcommand{\setbuilder}[1]{\left\{ #1 \right\}}
\newcommand{\petsc}{\textsc{PETSC}}
\newcommand{\globalMatrix}{\mmat{A}}
\DeclareFontFamily{U} {cmex}{}
\DeclareFontShape{U}{cmex}{m}{n}{
	<-6> cmex5
	<6-7> cmex6
	<7-8> cmex7
	<8-9> cmex8
	<9-10> cmex9
	<10-12> cmex10
	<12-> cmex12}{}
\DeclareSymbolFont{Xcmex} {U} {cmex}{m}{n}
\DeclareMathSymbol{\Xsum}{\mathop}{Xcmex}{80}
\newcommand{\logLogSlopeTriangle}[5]
{
	
	\pgfplotsextra
	{
		\pgfkeysgetvalue{/pgfplots/xmin}{\xmin}
		\pgfkeysgetvalue{/pgfplots/xmax}{\xmax}
		\pgfkeysgetvalue{/pgfplots/ymin}{\ymin}
		\pgfkeysgetvalue{/pgfplots/ymax}{\ymax}
		
		\pgfmathsetmacro{\xArel}{#1}
		\pgfmathsetmacro{\yArel}{#3}
		\pgfmathsetmacro{\xBrel}{#1-#2}
		\pgfmathsetmacro{\yBrel}{\yArel}
		\pgfmathsetmacro{\xCrel}{\xArel}
		
		\pgfmathsetmacro{\lnxB}{\xmin*(1-(#1-#2))+\xmax*(#1-#2)} 
		\pgfmathsetmacro{\lnxA}{\xmin*(1-#1)+\xmax*#1} 
		\pgfmathsetmacro{\lnyA}{\ymin*(1-#3)+\ymax*#3} 
		\pgfmathsetmacro{\lnyC}{\lnyA+#4*(\lnxA-\lnxB)}
		\pgfmathsetmacro{\yCrel}{\lnyC-\ymin)/(\ymax-\ymin)} 
		
		\coordinate (A) at (rel axis cs:\xArel,\yArel);
		\coordinate (B) at (rel axis cs:\xBrel,\yBrel);
		\coordinate (C) at (rel axis cs:\xCrel,\yCrel);
		
		\draw[#5]   (A)-- node[pos=0.5,anchor=north] {1}
		(B)-- 
		(C)-- node[pos=0.5,anchor=west] {#4}
		cycle;
	}
}
\newcommand{\logLogSlopeTriangleFlipped}[6]
{
	
	\pgfplotsextra
	{
		\pgfkeysgetvalue{/pgfplots/xmin}{\xmin}
		\pgfkeysgetvalue{/pgfplots/xmax}{\xmax}
		\pgfkeysgetvalue{/pgfplots/ymin}{\ymin}
		\pgfkeysgetvalue{/pgfplots/ymax}{\ymax}
		
		\pgfmathsetmacro{\xArel}{#1}
		\pgfmathsetmacro{\yArel}{#3}
		\pgfmathsetmacro{\xBrel}{#1-#2}
		\pgfmathsetmacro{\yBrel}{\yArel}
		\pgfmathsetmacro{\xCrel}{\xArel}
		
		\pgfmathsetmacro{\lnxB}{\xmin*(1-(#1-#2))+\xmax*(#1-#2)} 
		\pgfmathsetmacro{\lnxA}{\xmin*(1-#1)+\xmax*#1} 
		\pgfmathsetmacro{\lnyA}{\ymin*(1-#3)+\ymax*#3} 
		\pgfmathsetmacro{\lnyC}{\lnyA-#4*(\lnxA-\lnxB)}
		\pgfmathsetmacro{\yCrel}{\lnyC-\ymin)/(\ymax-\ymin)} 
		
		\coordinate (A) at (rel axis cs:\xArel,\yArel);
		\coordinate (B) at (rel axis cs:\xBrel,\yBrel);
		\coordinate (C) at (rel axis cs:\xCrel,\yCrel);
		
		\draw[#6]   (A)-- node[pos=0.5,anchor=south] {1}
		(B)-- 
		(C)-- node[pos=0.5,anchor=west] {#5}
		cycle;
	}
}
\newcommand{\logLogSlopeTriangleDecreasing}[6]
{
	
	\pgfplotsextra
	{
		\pgfkeysgetvalue{/pgfplots/xmin}{\xmin}
		\pgfkeysgetvalue{/pgfplots/xmax}{\xmax}
		\pgfkeysgetvalue{/pgfplots/ymin}{\ymin}
		\pgfkeysgetvalue{/pgfplots/ymax}{\ymax}
		
		\pgfmathsetmacro{\xArel}{#1}
		\pgfmathsetmacro{\yArel}{#3}
		\pgfmathsetmacro{\xBrel}{#1-#2}
		\pgfmathsetmacro{\yBrel}{\yArel}
		\pgfmathsetmacro{\xCrel}{\xBrel}
		
		\pgfmathsetmacro{\lnxB}{\xmin*(1-(#1-#2))+\xmax*(#1-#2)} 
		\pgfmathsetmacro{\lnxA}{\xmin*(1-#1)+\xmax*#1} 
		\pgfmathsetmacro{\lnyA}{\ymin*(1-#3)+\ymax*#3} 
		\pgfmathsetmacro{\lnyC}{\lnyA+#4*(\lnxA-\lnxB)}
		\pgfmathsetmacro{\yCrel}{\lnyC-\ymin)/(\ymax-\ymin)} 
		
		\coordinate (A) at (rel axis cs:\xArel,\yArel);
		\coordinate (B) at (rel axis cs:\xBrel,\yBrel);
		\coordinate (C) at (rel axis cs:\xCrel,\yCrel);
		
		\draw[#6]   (A)-- node[pos=0.5,anchor=north] {1}
		(B)-- node[pos=0.5,anchor=east] {#5}
		(C)-- 
		cycle;
	}
}
\begin{document}

\begin{frontmatter}
\title{\textbf{Space-time finite element analysis of the advection-diffusion equation using Galerkin/least-square stabilization}}

\author[ISUME]{Biswajit Khara \texorpdfstring{\corref{CORRAUTH}}{}} \emailauthor{bkhara@iastate.edu}{Biswajit Khara}
\author[ISUME]{Kumar Saurabh}
\author[CUTME]{Robert Dyja}
\author[ISUAER]{Anupam Sharma}
\author[ISUME]{Baskar Ganapathysubramanian \texorpdfstring{\corref{CORRAUTH}}{}}
\emailauthor{baskarg@iastate.edu}{Baskar Ganapathysubramanian}
\address[ISUME]{Department of Mechanical Engineering, Iowa State University, Ames, IA, USA}
\address[ISUAER]{Department of Aerospace Engineering, Iowa State University, Ames, IA, USA}
\address[CUTME]{Faculty of Mechanical Engineering and Computer Science, Czestochowa University of Technology, Czestochowa, Poland}
\cortext[CORRAUTH]{Corresponding author}


\begin{abstract}
We present a full space-time numerical solution of the advection-diffusion equation using a continuous Galerkin finite element method on conforming meshes. The Galerkin/least-square method is employed to ensure stability of the discrete variational problem. In the full space-time formulation, time is considered another dimension, and the time derivative is interpreted as an additional advection term of the field variable. We derive \textit{a priori} error estimates and illustrate spatio-temporal convergence with several numerical examples. We also derive \textit{a posteriori} error estimates, which coupled with adaptive space-time mesh refinement provide efficient and accurate solutions. The accuracy of the space-time solutions is illustrated against analytical solutions as well as against numerical solutions using a conventional time-marching algorithm.
\end{abstract}
\end{frontmatter}

\section{Introduction}\label{sec:introduction}

Numerically solving a transient (or evolution) problem characterized by a partial differential equation (PDE) requires that the continuous problem be discretized in space and time. The standard way to deal with this dual discretization, is to use a suitable time-marching algorithm coupled with some form of spatial discretization such as the finite difference method (FDM), the finite element method (FEM), the finite volume method (FVM) or the more recent isogeometric analysis (IgA). In some cases, the time marching algorithms themselves are based on finite difference methods and are used along with the above mentioned spatial discretizations. Generally speaking, these approaches consider the spatio-temporal domain over which the solution is desired as a product of a spatial domain with a temporal domain, with independent discretization and analysis of each of these components.


An alternative strategy to ``time-marching" is to  discretize and solve for the full ``space-time" domain together. Any combination of spatial and temporal discretization can be used -- for instance, finite difference schemes in both space and time \cite{horton1995space, lubich1987multi}; or FEM in space and FDM in time \cite{dyja2018parallel}; or FEM in both space and time \cite{langer2016space}. In particular, in the context of FEM,  a ``space-time" formulation refers to one where finite element formulation is used in both space and time, but time marching is not employed. A major appeal of formulating a problem in space-time is the possibility of improved parallel performance. The idea of parallelism in both space and time builds on a rich history of parallel time integration~\cite{Hackbusch:1985:PMM:4673.4714, lubich1987multi, horton1995space}. We refer to \cite{gander201550,vandewalle2013parallel} for a review of such methods.

The finite element community has a history of considering solutions to time dependent PDEs in space-time. The earliest references to space-time formulations go back to the mid 1980's. Babuska and co-workers~\cite{babuska1989h,babuvs1990h} developed $h-p$ versions of finite element method in space along with $p$ and $h-p$ versions of approximations in time for parabolic problems. Around the same time, Hughes and Hulbert formulated a space-time finite element method for elastodynamics problems \cite{hughes1988space} and general hyperbolic problems \cite{hulbert1990space} using time-discontinuous Galerkin method. Recently, there has been increasing interest in revisiting this problem given access to larger computational resources \cite{langer2016space, dyja2018parallel, ishii2019solving}. In addition, in the case where FEM is used to discretize the space, the time-marching algorithm can also be based on finite elements. This formulation also known as \textit{space-time formulation} is usually applied to a single time step. This approach has been successfully applied to a rich variety of applications \cite{tezduyar1992newI,tezduyar1992newII,takizawa2012space}. The current work, in contrast, explores solving for large space-time blocks. In this work, we tackle two key aspects associated with solving evolution equations in space-time on conforming space-time meshes -- stability and computational cost. We focus on a particular family of PDEs, specifically the time dependent advection-diffusion equation, with the time dependent diffusion equation as a special case (when the advection term goes to zero). 


\textbf{Stability of the discrete space-time formulation}: When solving parabolic equations through space-time methods, the question of stability of the ensuing discrete system becomes important. As mentioned earlier, in a sequential setting, the time derivative term is treated separately during temporal discretization. But when the advection-diffusion equation is formulated in space-time (i.e. time is considered another dimension, like the rest of the spatial dimensions), the time evolution term (first order derivative with respect to time) can be mathematically seen as an ``advection in time dimension" term and can be grouped with the other spatial first derivatives in the equation \cite{bank2017arbitrary}. This identification is mathematically consistent since all the first order derivatives, irrespective of whether they are spatial or temporal, have a sense of ``directionality" attached to them (the actual direction is determined by the sign of their coefficients). Mathematically then, the problem can be seen as a ``generalized advection-diffusion equation" in space-time, except, there is no diffusion term associated with the time dimension. This type of equation, when solved through the standard Galerkin method, can suffer from a lack of stability and end up with spurious oscillations \cite{brooks1982streamline, donea2003finite}. Andreev and Mollet \cite{andreev2012stability, mollet2014stability} analyzed the stability of space-time FEM discretizations of abstract linear parabolic evolution equations. Steinbach \cite{steinbach2015space} also analyzed the stability of the heat equation in space-time setting and derived error bounds using unstructured space-time finite elements. In 2017, Langer et al. \cite{langer2016space} used a time-upwind type Petrov-Galerkin basis function similar to the streamline diffusion method, to solve the heat equation in space-time moving domains. There is a large body of literature available that deals with treating non self-adjoint operators through finite element method \cite{brooks1982streamline,johnson1984finite,johnson1986streamline,hughes1989new,hughes1995multiscale}, a review of which can be found in \cite{franca2004stabilized}. In this work, we build upon this body of work and ensure stability of the discrete variational form by using a Galerkin/least squares (GLS) approach \cite{hughes1989new, franca1992stabilized}. We show that GLS provides stability to the discrete space-time problem and derive \textit{a priori} error bounds in the discrete norm associated with the bilinear form.

\textbf{Reducing computational cost via space-time adaptivity}: A space-time formulation adds one more dimension to concurrently discretize. A 2D problem needs a 3D mesh and a 3D problem needs a 4D mesh. Thus the number of degrees of freedom in the resulting linear system can become significantly larger than the corresponding sequential problem. 
Prior research has shown that this increased computational cost is ideally suited to sustained parallelism \cite{horton1995space,farhat2003time,cortial2009time,friedhoff2012multigrid,emmett2012toward,speck2012massively,langer2016space,dyja2018parallel,ishii2019solving}. Additional efficiencies can be accessed by making efficient use of adaptive mesh refinement (AMR) in space-time \cite{abedi2004spacetime,dyja2018parallel,ishii2019solving,christopher2020parallel,gomez2023design,steinbach20197}. To this end, we mathematically derive a residual-based \textit{a posteriori} error indicator which can be used to estimate elementwise errors. This enables us to leverage the benefits of AMR in a space-time setting which render very accurate solution to the diffusion problem and the advection-diffusion problems considered here. 

The rest of the content of this paper is organized as follows. In \secref{sec:math-formulation}, we introduce the mathematical formulation of the problem. We formulate the continuous and the discrete problems and state the relevant function spaces. We then prove the stability of the discrete bilinear form and then derive \textit{a priori} and \textit{a posteriori} error estimates. In \secref{sec:implementation}, we briefly discuss some of the implementation details. Then, in \secref{sec:numerical-examples}, we present numerical examples that validate the theoretical results of \secref{sec:math-formulation}, and also demonstrate the advantages of using adaptive mesh refinement in space-time analysis. Finally, in \secref{sec:conclusion}, we draw conclusions and make some comments about future research directions.

\section{Mathematical Formulation}\label{sec:math-formulation}
\subsection{The time-dependent linear advection-diffusion equation}\label{subsec:pde}
\begin{figure}[!t]
	\centering
	\def \mpagefrac{0.7}
	\begin{minipage}{\mpagefrac\textwidth}
		\centering
		\begin{tikzpicture}
		\def \myscale{\mpagefrac / 0.5} 
		\draw (0, 0) node[inner sep=0] {\includegraphics[trim=3.3in 0.5in 3.3in 0.8in, clip,width=0.7\linewidth]{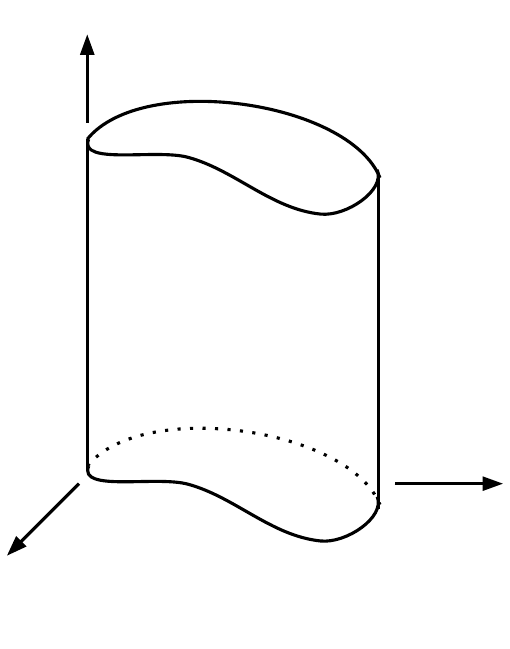}};
		\draw (0, 0) node {$ \stDom = \spDom \times \temporalInterval $};
		\draw (-2.5*\myscale, 3.0*\myscale) node {$t$};
		\draw (-2.7*\myscale,  2.1*\myscale) node {$t=T$};
		\draw (-2.7*\myscale, -1.5*\myscale) node {$t=0$};
		\draw (-2.2*\myscale, -2.7*\myscale) node {$x_1$};
		\draw ( 2.6*\myscale, -1.5*\myscale) node {$x_2$};
		\draw[stealth-, thick] (-0.2*\myscale, -1.6*\myscale) -- ++(0.5*\myscale,-1.3*\myscale) node[right, black, fill=white, anchor=north]{$\bit = \spDom\times \{0\}$};
		\draw[stealth-, thick] (-0.2*\myscale, 2.1*\myscale) -- ++(0.2*\myscale,1.0*\myscale) node[right, black, fill=white, anchor=south]{$\bft = \spDom\times \{T\}$};
		\draw[stealth-, thick] (1.45*\myscale, 0.1*\myscale) -- ++(0.7*\myscale,1.0*\myscale) node[right, black, fill=white, anchor=south west]{$\bsp = \partial \spDom \times (0,T)$};
		\end{tikzpicture}
	\end{minipage}
	\caption{Schematic depiction of the space-time domain $ \stDom = \spDom \times \temporalInterval $, where $ \spDom \subset \mathbb{R}^{\nsd} $, and $ \temporalInterval = [0,T] \subset \mathbb{R}^{+}. $} 
	\label{fig:schematic-space-time-domain}
\end{figure}
Consider a bounded spatial domain $\spDom \in \mathbb{R}^{\nsd}, \nsd=1,2,3$ with Lipschitz continuous boundary $\partial \spDom$ and a bounded time interval $\temporalInterval = (0,T] \in \mathbb{R^+}$. We define the space-time domain as the Cartesian product of the two as $\stDom = \spDom \times \temporalInterval = \spDom\times(0,T] \subset \mathbb{R}^{\nsdt}$, $\nsdt = \nsd + 1$ (see \figref{fig:schematic-space-time-domain}). The overall boundary of this space-time domain is defined as $\Gamma = \partial \stDom$. This overall boundary is the union of the spatial boundaries and the time boundaries. The spatial domain boundary is denoted by $\bsp = \partial \spDom \times (0,T]$; whereas the time boundaries are denoted by $\Gamma_{_0} = \bar{\spDom}\times\{0\}$ and $\Gamma_{_T} = \bar{\spDom}\times\{T\}$ which are the initial and final time boundaries respectively. The closure of the space-time domain is $ \bar{\stDom} = \stDom \cup \bsp \cup \Gamma_{_0} $. The advection-diffusion equation can then be written for the scalar function $u: \stDom \rightarrow \mathbb{R}$ as:
\begin{subequations}\label{pde:linear-advection-diffusion}
\begin{align}
    \ut + \adiv u- \visco \laplacian u &=f \quad \textrm{in}\quad\stDom\label{pde:linear-advection-diffusion-equation},\\
u&=g\quad \textrm{on}\quad\bsp\label{pde:linear-advection-diffusion-BC},\\
u&=u_0\quad \textrm{on}\quad \Gamma_{0},\label{pde:linear-advection-diffusion-IC}
\end{align}
\end{subequations}
where $f: \stDom \rightarrow \mathbb{R}$ is a smooth forcing function,  $\visco >0 $ is the diffusivity and does not depend on $u$; and Dirichlet boundary conditions are imposed on the boundary $\bsp$. Note that $\grad$ is the usual gradient operator in the space $\mathbb{R}^{\nsd}$, i.e., $ \grad \equiv (\nicefrac{\partial}{\partial x}, \nicefrac{\partial}{\partial y}) $ for $ \nsd = 2 $ and $ \grad \equiv (\nicefrac{\partial}{\partial x}, \nicefrac{\partial}{\partial y}, \nicefrac{\partial}{\partial z}) $ for $ \nsd = 3 $ respectively. We further define the space-time gradient operator $\gradst$ as: $ \gradst \equiv (\grad, \nicefrac{\partial}{\partial t}) $, and the space-time advection operator as $\advst = (\adv, 1)$ where the unity is the ``advection in time.''

Let us define two operators $ \opL $ and $ \opLT $ as follows:
\begin{subequations}
	\begin{align*}
	\opL u &:= \adiv u- \visco \laplacian u, \\
	\text{and}\ \ \opLT u &:= \ut + \opL u \\
	&= \ut + \adiv u- \visco \laplacian u.
	\end{align*}
\end{subequations}

Since $ \visco > 0$, therefore $ \opL $ is elliptic and it follows that the operator $\opLT = (\nicefrac{\partial}{\partial t} + L)$ is strictly parabolic. Such equations are typically solved with a \textit{method of lines} discretization, by solving a series of discrete equations sequentially. At each ``time-step'', the solution is assumed to be a function of the spatial variables only. In the context of the Galerkin methods, this spatial approximation takes the form of a (discrete) Sobolev space, e.g., the solution at the $ i^{\text{th}} $ step, $ u^i(x) \in  H^1(\spDom) $. But in this work, our focus will be \textit{coupled space-time} formulation of the advection-diffusion equation where we approximate the solution in a Sobolev space defined on the full space-time domain (i.e., $ u(x,t) \in H^1(\stDom) $). As alluded to in the introduction, this kind of problem formulation requires some form of stabilization. Here, the Galerkin/Least Squares method is used for this purpose.

When we add such stabilizing terms to the original variational problem, we essentially end up adding some numerical ``diffusion" into the system. More importantly, diffusion may be introduced in the time direction as well, which is absent in the original parabolic equation. We show, in the results section, that careful design of stabilization ensures that this diffusion in the time direction can be made arbitrarily small. In essence, then, the equation becomes elliptic in the space-time domain. Anticipating such changes to the equation, we can cast this parabolic equation as a generalized elliptic equation as follows,
\begin{align}\label{pde:generalized-elliptic-summation-form}
    -\Xsum_{i=1}^{\nsd+1}\Xsum_{j=1}^{\nsd+1} \frac{\partial}{\partial x_i}\left( \ellipticA_{ij} (x) \frac{\partial u}{\partial x_j}\right) + \Xsum_{i=1}^{\nsd+1} \ellipticB_i(x)\frac{\partial u}{\partial x_i} = f,
\end{align}
where $\ellipticA_{ij}$ and $\ellipticB_i$ are the components of $\ellipticAmat\in \mathbb{R}^{\nsdt \times \nsdt}$ and $\ellipticBvec\in \mathbb{R}^{\nsdt}$:
\begin{align} \label{expr:ellipticity-matrix-original}
  \ellipticAmat:= 
    \begin{bmatrix}
    \visco \mvec{I}_{\nsd\times\nsd} & \mathbf{0}_{\nsd \times 1}  \\
     \mathbf{0}_{1 \times \nsd} & 0
    \end{bmatrix}
    ,\qquad
    \ellipticBvec := 
    \begin{bmatrix}
    \adv_{\nsd \times 1} \\ 1
    \end{bmatrix},
\end{align}
where $\mvec{I}$ refers to an identity matrix; and the zero diagonal in $\ellipticAmat$ and the last element in $\ellipticBvec$ respectively refer to the absence of diffusion and the unit advection in the time dimension.
After we finish the formulation with Galerkin/Least squares, we will eventually have a small positive real entry in the last diagonal term along with some non-zero terms in the off-diagonals. Note that we have also assumed an isotropic diffusive medium, so the principal diffusivity values are the same in all directions. This assumption can be trivially relaxed. In the $\adv$ vector, along with the spatial advection components, we have the ``advection in time" component which has the value 1.

This equation can then be rewritten in a more compact form:
\begin{align}\label{pde:generalized-elliptic-operator-form}
    \left(-\gradst\cdot\ellipticAmat\cdot\gradst + \ellipticBvec\cdot\gradst\right)u = f.
\end{align}
\subsection{Space-time inner products and norms}\label{sec:inner-prod-norms}
Let us define the function spaces $V$ and $\spaceVd$ as 
\begin{subequations}
\begin{align}\label{def:function-space-V}
	\spaceV &:= \left\{v\in H^1(\stDom) : v = 0 \textrm{ on }\bit \cup \bsp \right\}, \\
	\spaceVd &:= \left\{ v\in H^1(\stDom) : v = u_0\ \text{on}\ \bit,\ v=g\ \text{on}\ \bsp \right\}.
\end{align}
We also define the following function space of bounded divergence-free convection fields:
\begin{align}
	\spaceVdiv := \setbuilder{v \in \mvec{H}^{1}(\stDom) :\ \divergence v = 0\ \text{in}\ \stDom},
\end{align}
where $ \mvec{H}^{1}(\stDom) = [\mvec{H}^1(\stDom)]^{\nsd} $ is a product function space.
\end{subequations}

Given $v,\ w\in H^1(\stDom)$, we define the $ L^2 $-inner product and the $ L^2 $-norm in $ \stDom $ as
\begin{align*}
(v,w) &:= \int_{\stDom} vw \text{ } d\stDom = \int_{0}^{T}\int_{\spDom} vw \text{ } d\spDom dt, \\
\norm{v} &:= \sqrt{\inner{v}{v}}.
\end{align*}
On any $ \Gamma \subset \partial \stDom $, we define the $ L^2 $-inner product and norm by using a subscript:, e.g.,
\begin{align*}
(v,w)_{\Gamma} := \int_{\Gamma} vw \ d\Gamma \quad \text{and} \quad \norm{u}_{\Gamma} := \sqrt{\inner{u}{u}_{\Gamma}}.
\end{align*}
A special case is the integration over the final time boundary $ \bft $, for which we will use:
\begin{align*}
(v,w)_{\bft} := \int_{\spDom} v(\xvec,T) \ w(\xvec,T) \ d\spDom \quad \text{and} \quad \norm{u}_{\bft} := \sqrt{\inner{u}{u}_{\bft}}.
\end{align*}
Unless otherwise stated, an inner product or a norm without any subscript will be assumed  to be that calculated in $ \stDom $. A typical integration-by-parts in the \textit{spatial dimensions} can be written as:
\begin{align}
\inner{v}{\laplacian w} = \intTimeSpace v\laplacian w \ d\spDom \ dt &= \intTime \left[ -\intSpace \grad v \cdot \grad w\ d\spDom + \int_{S=\partial \spDom} v (\normalvec\cdot\grad w) \ dS \right]dt \\
&= -\inner{\grad v}{\grad w} + \inner{v}{\normalvec\cdot \grad w}_{\bsp}.
\end{align}


Given $ \adv \in \spaceVdiv $ and $ F\in L^2(\stDom) $, the variational problem for \eqref{pde:linear-advection-diffusion} is then to find $u \in \spaceVd$ such that
\begin{equation}\label{var-prob:continuous-variational-form}
	\bform(u,v) = l(v)\quad \forall v \in V,
\end{equation}
where
\begin{subequations}\label{def:continuous-bilinear-linear-forms}
	\begin{align}
		\bform(u,v) &:= (u_t,v) + (\adv\cdot\grad u, v) + (\visco\grad u, \grad v), \label{def:continuous-bilinear-form}\\
		l(v) &:= (F,v). \label{def:continuous-linear-form}
	\end{align}
\end{subequations}
\subsection{Discretization, and discrete inner products and norms}\label{sec:discrete-inner-prod-norm}
We define a tessellation $\mesh$ as the partition of $\stDom$ into a finite number of non-overlapping elements such that $\bar{\stDom} = \bigcup_{K \in \mesh} K$. In the sequel, we will consider a sequence of tessellations $\{\mesh^i\}$ for $i = 0,\ 1,\ \ldots,\ \nrefine$. Here, $\mesh^0$ is the initial mesh, $ \nrefine $ is the number of refined meshes, and for $i \geq  1$, each $\mesh^i$ is obtained by refining some (or all) elements of $\mesh^{i-1}$. In this work, we will restrict ourselves to tessellations where each finite element is a $(\nsd+1)$-dimensional hypercube. And each refined mesh is obtained by performing a ``bisection-type'' mesh refinement, i.e., by bisecting the $d$-dimensional edges of the current hypercube element, and then joining the opposite edges.

Following Bank et al. \cite{bank1983some}, we define a regular vertex as a vertex of $\mesh$ that is a corner of each unrefined element it touches. All other vertices are called irregular. And we call a mesh $k$-irregular if the maximum number of irregular vertices in the mesh is no greater than $k$. In this work, we will restrict ourselves to \emph{1-irregular meshes}, i.e., we refine any unrefined element which contains more than two irregular nodes on any of its edges. This strategy is also known as $2:1$ balancing, and more details on this technique can be found in \cite{bank1983some,sundar2008bottom}.

\begin{remark}
	We assume that each tesselation $\mesh^i$ is \emph{shape regular}, i.e., there exists a constant $\shapeRegularConst$ such that
	\begin{align} \label{eq:shape-regularity-condition}
		\frac{h_K}{\rho_K} \leq \shapeRegularConst \ \forall K \in \mesh^i,
	\end{align}
	for $i = 0,\ 1,\ 2, \ldots,\ \nrefine$, where $h_K$ is the diameter of the element $K$, and $ \rho_K $ is the supremum of the diameters of all the spheres contained in the element $K$.
\end{remark}

The following discussion applies to any given mesh $\mesh^i$, therefore we drop the superscript $i$. Over a given mesh $\mesh$, we define the discrete function spaces $\spaceVh \subset \spaceV$ and $\spaceVhd \subset \spaceVd$ as
\begin{align}\label{def:functiona-space-Vh}
	\spaceVh &:= \left\{\vh \in \spaceV: \vh|_K \in P_k(K), K \in \mesh \right\}, \\
	\spaceVhd &:= \left\{\vh \in \spaceVd: \vh|_K \in P_k(K), K \in \mesh \right\}.
\end{align}
$P_k(K),\ k \geq 1$, is the space of tensor-product polynomial functions in $K$ where the tensor-product basis functions are constructed by polynomials of degree $k$ in each dimension.

We can then define the discrete inner product on this mesh as
\begin{align} \label{def:discrete-norm}
\innerh{u}{v} := \Xsum_{K \in \mesh} \int_{K} uv \ dK,
\end{align}
and the associated discrete norm as
\begin{align}
\normh{\uh} := \sqrt{\innerh{\uh}{\uh}}.
\end{align}

\subsection{Stabilized variational problem} \label{sec:stab-var-prob}
If we naively cast \eqref{var-prob:continuous-variational-form} in terms of the discrete function spaces, the discrete solutions can exhibit numerical instabilities because the bilinear form $\bform(\uh,\vh)$ is not strongly coercive in $\spaceVh$ \cite[see][Sec 10.1.4-10.1.7]{larson2013finite},\cite[Lemma 1]{langer2016space}. This issue can be tackled by adding some amount of numerical diffusion in the time direction or by adding upwind type correction to the weighting functions ~\cite{langer2016space,  steinbach2015space}. 
In what follows, we try to establish a stable variational form of the space-time advection-diffusion problem \eqref{pde:linear-advection-diffusion} by applying the Galerkin/least square (GLS) method \cite{hughes1989new}. The resulting discrete problem is stable and converges to the original PDE in the limit of the mesh size approaching zero.


An equation of the form $\opLT u=F$ can be cast into a least square minimization problem as
\begin{align}\label{eq:least-square-minimization}
    \uh = \argmin_{v\in \spaceVhd} \normh{\opLT v-F} ^2.
\end{align}
The resulting Euler-Lagrange equation corresponding to this minimization problem is
\begin{align}\label{eq:least-square-euler-lagrange}
    \innerh{\opLT \uh}{\opLT \vh} = \innerh{F}{\opLT \vh} \ \forall \vh \in \spaceVh.
\end{align}
In the GLS formulation, equation \eqref{eq:least-square-euler-lagrange} is added to equation \eqref{var-prob:continuous-variational-form} (specifically, its discrete counterpart), i.e., we seek $\uh \in \spaceVhd$ such that,
\begin{align} 
	\label{eq:gls-abstract}
    (\opLT \uh, \vh) + \glspar(\opLT \uh,\opLT \vh)_h &= (F,\vh) + \glspar(F,\opLT \vh)_h, \quad\forall \vh \in V_h    
\end{align}
where $\glspar$ is a positive piecewise-constant function to be chosen later (see \remref{remark:stabilization}).
Thus, given $ \adv \in \spaceVdiv $ and $ F \in L^2(\stDom) $, the discrete variational problem with Galerkin--least-square stabilization reads as: find $\uh \in \spaceVhd$ such that
\begin{align}\label{def:discrete-variational-form}
    \bformh(\uh,\vh) = \lformh(\vh) \quad\forall \vh \in V_h,
\end{align}
where the discrete bilinear form $\bformh(\uh,\vh)$ and linear form $\lformh(\vh)$ are as follows:
\begin{align}\label{def:discrete-bilinear-form}
\begin{split}
    \bformh(\uh,\vh) &:= (\uht,\vh) + (\adiv\uh, \vh) + (\visco\grad\uh,\grad\vh)\\
    &\quad +\innerh{\glspar [\uht + \adiv \uh- \visco\laplacian \uh]}{[\vht + \adiv \vh- \visco\laplacian\vh]},
\end{split}\\
\label{def:discrete-linear-form}
\lformh(\vh) &:= \inner{F}{\vh} + \innerh{F}{[\vht + \adiv \vh- \visco\laplacian\vh]}. 
\end{align}

\begin{remark} \label{remark:gls-consistency}
	If the exact solution $ u $ of \eqref{pde:linear-advection-diffusion} belongs to $ H^1(\temporalInterval; H^2(\spDom))$, then the Galerkin--Least-square formulation \eqref{eq:gls-abstract} is consistent, i.e., the exact solution $u$ satisfies \eqref{eq:gls-abstract}. We can rewrite \eqref{eq:gls-abstract} as
	\begin{align}
	\innerh{\opLT \uh - F}{\vh + \glspar\opLT \vh} &= 0.
	\end{align}
	Now, if we substitute $ \uh = u $ in the left hand side of the above equation, we have
	\begin{align}\label{eq:gls-abstract-for-exact-solution}
	\innerh{\opLT u - F}{\vh + \glspar\opLT \vh} &= 0 \ \ \forall \vh \in \spaceVh
	\end{align}
	because $ \opLT u - F = 0 $ pointwise (see \eqref{pde:linear-advection-diffusion-equation}). It follows that 
	\begin{align}\label{eq:exact-solution-discrete-form}
	\bformh(u,\vh) = \lformh(\vh) \quad\forall \vh \in \spaceVh
	\end{align}
\end{remark}
\begin{remark}\label{remark:galerkin-orthogonality}
	Under the same assumptions as \remref{remark:gls-consistency}, subtracting \eqref{eq:exact-solution-discrete-form} from  \eqref{def:discrete-variational-form}, we get the so-called Galerkin orthogonality:
	\begin{align*}
	\bformh(\uh-u,\vh) = 0 \quad\forall \vh \in \spaceVh.
	\end{align*}
\end{remark}
\begin{remark} \label{remark:stabilization}
	The expression for the stabilization function $ \glspar $ for the space-time problem \eqref{def:discrete-variational-form} is adapted from existing estimates \cite{codina2000stabilized}, and is given for element $K$ as
	\begin{align} \label{eq:glspar-formula}
		\glspar_K = \left[ \frac{c_1}{\he^2} \visco + \frac{c_2}{\he} \norm{\advst} \right]^{-1},
	\end{align}
	where $\advst$ is the space-time advection defined in \secref{subsec:pde}, and $ c_1,c_2 >0 $ are some positive constants. An alternative definition of $ \glspar $ can be found in \cite{shakib1991new}.
\end{remark}


\subsection{Analysis of the stabilized formulation}
We now turn to the analysis of stability and convergence of the method discussed in the previous section. We will prove the stability and the convergence estimates in the following discrete norm:
\begin{align}
\normVh{\uh} := \left[\norm{\uh}_{\bft}^2 + \visco\norm{\grad\uh}^2 + \normh{\glspar^{\halfnice}[\uht + \adiv\uh - \visco\laplacian\uh]}^2\right]^{\halfnice}
\label{eq:discrete-norm}
\end{align}
We also define an auxiliary norm as follows:
\begin{align}
\normVhstar{\uh}^2 := \left(\norm{\uht}^2 + \normVh{\uh}^2\right)^{\halfnice}.
\end{align}
Note that $\normVh{\uh} \leq \normVhstar{\uh}$ by definition. We will use both these norms in the sequel to prove the stability and boundedness of the discrete bilinear form \eqref{def:discrete-bilinear-form}.

\begin{lemma}[Boundedness]\label{lemma:boundedness}
	For $ \adv \in \spaceVdiv $ and $ \uh \in \spaceVhd,\ \vh \in \spaceVh $, the bilinear form \eqref{def:discrete-bilinear-form} is uniformly bounded on $\spaceVhd \times \spaceVh$, i.e.,
	\begin{align}
		|\bformh(\uh,\vh)|\leq\mu_b \normVh{\uh} \normVhstar{\vh}.
	\end{align}
\end{lemma}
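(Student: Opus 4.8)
The plan is to decompose the bilinear form \eqref{def:discrete-bilinear-form} into its three natural pieces: the space-time convective term $(\uht,\vh)+(\adiv\uh,\vh)=(\adivst\,\uh,\vh)$, the diffusive term $(\visco\grad\uh,\grad\vh)$, and the least-squares stabilization $\innerh{\glspar\,\opLT\uh}{\opLT\vh}$, and to bound each against $\normVh{\uh}\normVhstar{\vh}$ by Cauchy--Schwarz, reading the matching factors directly off the definitions \eqref{eq:discrete-norm} of the two norms. The diffusive and stabilization pieces are immediate: writing them as $(\visco^{\halfnice}\grad\uh,\visco^{\halfnice}\grad\vh)$ and $\innerh{\glspar^{\halfnice}\opLT\uh}{\glspar^{\halfnice}\opLT\vh}$ and applying Cauchy--Schwarz produces $\visco^{\halfnice}\norm{\grad\uh}\cdot\visco^{\halfnice}\norm{\grad\vh}$ and $\normh{\glspar^{\halfnice}\opLT\uh}\cdot\normh{\glspar^{\halfnice}\opLT\vh}$, and each factor is dominated by $\normVh{\uh}$ or by $\normVh{\vh}\le\normVhstar{\vh}$.

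The convective term is the crux, and it is what forces the asymmetric choice of norms. Since $\normVh{\uh}$ does not control $\norm{\uht}$, the time derivative cannot remain on $\uh$; instead I would integrate by parts in space-time. Because $\adv\in\spaceVdiv$ is divergence-free and the advection-in-time component is the constant $1$, the space-time field $\advst=(\adv,1)$ satisfies $\gradst\cdot\advst=0$; integrating by parts and using that $\vh$ vanishes on $\bit\cup\bsp$ while $\advst\cdot\normalvec=1$ on $\bft$ yields
\begin{align*}
(\adivst\,\uh,\vh)=(\uh,\vh)_{\bft}-(\uh,\adivst\,\vh).
\end{align*}
The boundary contribution is controlled by $\norm{\uh}_{\bft}\norm{\vh}_{\bft}\le\normVh{\uh}\normVhstar{\vh}$, which matches the trace terms in both norms.

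It then remains to estimate $(\uh,\adivst\,\vh)\le\norm{\uh}\,\norm{\adivst\,\vh}$. The factor $\norm{\adivst\,\vh}\le\norm{\vht}+\normL[\infty]{\adv}{\stDom}\norm{\grad\vh}$ is exactly what $\normVhstar{\vh}$ controls --- this is precisely why the test function must be measured in the stronger norm carrying the additional $\norm{\vht}^2$, the diffusive part contributing a factor $\visco^{-\halfnice}$ through $\norm{\grad\vh}\le\visco^{-\halfnice}\normVh{\vh}$.

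The main obstacle is the leftover $L^2$ factor $\norm{\uh}$, which is \emph{not} among the terms of $\normVh{\uh}$. I would absorb it through a Poincar\'e--Friedrichs inequality applied slice-by-slice in time: for functions vanishing on the lateral boundary $\bsp$ the spatial Poincar\'e inequality gives $\norm{\uh}\le\poincareConstant\norm{\grad\uh}\le\poincareConstant\visco^{-\halfnice}\normVh{\uh}$. This is the delicate step, since it is exactly where the homogeneous condition on $\bsp$ enters. Collecting the four estimates and taking $\mu_b$ to be the largest accumulated constant --- depending on $\normL[\infty]{\adv}{\stDom}$, $\visco$, $\poincareConstant$ and the domain, but independent of the mesh size --- delivers $|\bformh(\uh,\vh)|\le\mu_b\normVh{\uh}\normVhstar{\vh}$.
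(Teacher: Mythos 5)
Your proof is correct, but it routes the convective term differently from the paper, and the difference is substantive. The paper integrates by parts only the time-derivative term, writing $(\uht,\vh) = -\inner{\uh}{\vht} + \inner{\uh}{\vh}_{\bft}$ (using $\vh=0$ on $\bit$), and keeps the spatial advection term on the trial side, bounding $|\inner{\adiv\uh}{\vh}|$ by the generalized H\"older inequality with exponents $(4,2,4)$ together with the Sobolev embedding $H^1(\stDom)\hookrightarrow L^4(\stDom)$ (valid for $\nsdt = 2,3,4$), which yields $\gamma \norm{\grad\uh}\norm{\grad\vh}$ with $\gamma = \sqrt{\poincareConstant^2+1}\,\normH[1]{\adv}{\stDom}$ --- so the paper needs only $\adv\in\mvec{H}^1(\stDom)$, exactly the displayed definition of $\spaceVdiv$. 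You instead integrate by parts the entire space-time transport operator at once using $\gradst\cdot\advst = 0$; this is the bilinear analogue of the skew-symmetry identity the paper exploits in the coercivity proof, and it is arguably cleaner and more elementary (no Sobolev embedding needed). The price is that the resulting factor $\norm{\adivst\vh}$ forces you to control $\norm{\adiv\vh}$ in $L^2$, which with $\adv$ merely in $H^1$ would require $\grad\vh\in L^4$ --- a quantity \emph{not} controlled by $\normVhstar{\vh}$ --- so your hypothesis $\adv\in L^\infty(\stDom)$ is genuinely essential to your argument, and it is strictly stronger than the paper's stated membership in $\spaceVdiv$ (though consistent with the paper's prose description of ``bounded'' convection fields); correspondingly your $\mu_b$ depends on $\normL[\infty]{\adv}{\stDom}$ where the paper's depends on $\normH[1]{\adv}{\stDom}$. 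One further point in your favor: both proofs absorb the leftover $\norm{\uh}$ via the Poincar\'e inequality $\norm{\uh}\leq \poincareConstant\norm{\grad\uh}$, which implicitly assumes homogeneous lateral data for $\uh\in\spaceVhd$ (i.e.\ $g=0$); the paper invokes this silently, while your slice-by-slice-in-time phrasing makes the needed boundary condition explicit. Both arguments deliver a mesh-independent $\mu_b$ with the same $\visco^{-1}$-type degeneration as $\visco\to 0$.
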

\begin{proof}
	See \ref{sec:proof-boundedness}.
\end{proof}

\begin{lemma}[Coercivity]\label{lemma:coercivity-stability}
If $ \adv \in \spaceVdiv $ and $ \uh \in \spaceVh $, then
\begin{align*}
    \bformh(\uh,\uh) \geq \mu_c\normVh{\uh}^2,\ \text{where}\ \mu_c =\halfnice.
\end{align*}
\end{lemma}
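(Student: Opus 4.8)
The plan is to test the bilinear form against $\vh = \uh$ and observe that, of the four terms in \eqref{def:discrete-bilinear-form}, two reproduce pieces of $\normVh{\uh}^2$ verbatim while the remaining two are handled by integration by parts. Setting $\vh=\uh$ gives
\[
\bformh(\uh,\uh) = (\uht,\uh) + (\adiv\uh,\uh) + \visco\norm{\grad\uh}^2 + \normh{\glspar^{\halfnice}\opLT\uh}^2 .
\]
The third and fourth terms already coincide exactly with the diffusion and stabilization contributions in the discrete norm \eqref{eq:discrete-norm}, so the whole argument reduces to controlling the time-derivative term $(\uht,\uh)$ and the convection term $(\adiv\uh,\uh)$.

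For the time term I would integrate by parts in the temporal variable alone. Writing $\uht\,\uh = \half\,\partial_t(\uh^2)$ and integrating over $t\in(0,T)$ for each fixed $\xvec$ yields
\[
(\uht,\uh) = \half\intSpace\big[\uh^2(\xvec,T) - \uh^2(\xvec,0)\big]\,d\spDom .
\]
Since $\uh \in \spaceVh$ vanishes on $\bit$, the contribution at $t=0$ drops out, leaving $(\uht,\uh) = \half\norm{\uh}_{\bft}^2$, which is exactly half of the final-time term in the norm.

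For the convection term I would exploit the divergence-free assumption $\adv\in\spaceVdiv$. Using $\uh(\adv\cdot\grad\uh) = \adv\cdot\grad(\uh^2/2)$ together with the identity $\adv\cdot\grad\phi = \divergence(\adv\phi) - (\divergence\adv)\phi$, the term becomes
\[
(\adiv\uh,\uh) = \intTimeSpace \divergence\!\big(\tfrac{1}{2}\adv\,\uh^2\big)\,d\spDom\,dt \;-\; \intTimeSpace (\divergence\adv)\,\tfrac{1}{2}\uh^2\,d\spDom\,dt .
\]
The second integral vanishes because $\divergence\adv=0$, and the first, after applying the spatial divergence theorem, reduces to a boundary integral over $\bsp$ weighted by $\adv\cdot\normalvec$; this also vanishes since $\uh=0$ on $\bsp$. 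Hence $(\adiv\uh,\uh)=0$, i.e.\ the convection operator is skew-symmetric under the stated hypotheses.

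Collecting the pieces gives
\[
\bformh(\uh,\uh) = \half\norm{\uh}_{\bft}^2 + \visco\norm{\grad\uh}^2 + \normh{\glspar^{\halfnice}\opLT\uh}^2 ,
\]
and since each of the last two terms dominates half of itself, the right-hand side is bounded below by $\half\normVh{\uh}^2$, yielding $\mu_c=\halfnice$. I do not expect a genuine analytical obstacle here; the argument is essentially the standard energy/skew-symmetry computation. The only steps requiring care are the correct bookkeeping in the two integrations by parts and the deliberate use of \emph{both} boundary conditions encoded in $\spaceVh$ (the vanishing on $\bit$ to kill the initial-time term, and the vanishing on $\bsp$ to kill the convective boundary flux) in conjunction with the divergence-free hypothesis on $\adv$.
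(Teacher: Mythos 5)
Your proposal is correct and follows essentially the same route as the paper: test with $\vh=\uh$, convert $(\uht,\uh)$ to $\halfnice\norm{\uh}_{\bft}^2$ via temporal integration by parts and the vanishing on $\bit$, kill the convection term using $\divergence\adv=0$ together with $\uh=0$ on $\bsp$, and keep the diffusion and stabilization terms verbatim. The only cosmetic difference is that the paper phrases the vanishing of $(\adiv\uh,\uh)$ through skew-symmetry ($2(\adiv\uh,\uh)=0$) while you write the integrand as $\divergence\bigl(\tfrac{1}{2}\adv\,\uh^2\bigr)$ and apply the divergence theorem directly — these are the same computation.
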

\begin{proof}
	See \ref{sec:proof-coercivity}.
\end{proof}

\lemref{lemma:coercivity-stability} imply that \eqref{def:discrete-variational-form} has a unique solution.

\begin{corollary}
	Given $ \adv \in \spaceVdiv $, there exists a unique solution $\uh$ of \eqref{def:discrete-variational-form}, such that $\normVh{\uh} \leq \nicefrac{1}{\mu_c} \normVhstar{F} $.
\end{corollary}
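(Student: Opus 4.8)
The plan is to read the corollary as the finite-dimensional Lax--Milgram statement that follows once \lemref{lemma:boundedness} and \lemref{lemma:coercivity-stability} are available, supplemented by an energy estimate for the a priori bound. First I would reduce existence and uniqueness to a linear-algebra fact. Since $\spaceVh$ is finite dimensional, I lift the boundary data by writing $\uh = \wh + \uh^g$, where $\uh^g \in \spaceVhd$ is a fixed interpolant of $(g,u_0)$ and $\wh \in \spaceVh$, so that \eqref{def:discrete-variational-form} becomes: find $\wh \in \spaceVh$ with $\bformh(\wh,\vh) = \lformh(\vh) - \bformh(\uh^g,\vh)$ for all $\vh \in \spaceVh$. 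The map $\wh \mapsto \bformh(\wh,\cdot)$ is then a linear endomorphism of $\spaceVh$ (identified with its dual), and \lemref{lemma:coercivity-stability} shows it is injective: $\bformh(\wh,\wh)=0$ forces $\normVh{\wh}=0$, and because $\visco>0$ the gradient contribution makes $\normVh{\cdot}$ a genuine norm on $\spaceVh$ (a vanishing gradient together with homogeneous values on $\bit\cup\bsp$ gives $\wh=0$). Injectivity of a square system yields surjectivity, hence a unique $\wh$, and therefore a unique $\uh$.

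For the a priori bound I would run a standard energy argument. Taking $\vh = \uh$ (for the stated estimate one works with homogeneous data, so that $\uh \in \spaceVh$) and applying \lemref{lemma:coercivity-stability} gives
$$\mu_c \normVh{\uh}^2 \leq \bformh(\uh,\uh) = \lformh(\uh).$$
It then remains to establish $|\lformh(\uh)| \leq \normVhstar{F}\,\normVh{\uh}$; dividing through by $\normVh{\uh}$ closes the argument. To get this I would split $\lformh(\uh) = \inner{F}{\uh} + \innerh{F}{\opLT \uh}$ and bound the two pieces by Cauchy--Schwarz: after distributing the weight $\glspar$, the stabilization pairing $\innerh{F}{\opLT\uh}$ is controlled by the stabilized-residual term $\normh{\glspar^{\halfnice}\opLT\uh}$ that already appears inside $\normVh{\uh}$, with the remaining $F$-dependence collected into $\normVhstar{F}$.

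The main obstacle is the plain $L^2$ term $\inner{F}{\uh}$. The norm $\normVh{\uh}$ controls $\norm{\uh}_{\bft}$, $\visco^{\halfnice}\norm{\grad\uh}$ and the stabilized residual, but it does not directly bound $\norm{\uh}_{L^2(\stDom)}$. I expect to close this gap with a Poincaré-type inequality $\norm{\uh} \leq \poincareConstant \norm{\grad\uh}$ (available because $\uh$ vanishes on $\bit\cup\bsp$), so that $\inner{F}{\uh}$ is absorbed into the gradient part of $\normVh{\uh}$ at the cost of constants that are gathered inside $\normVhstar{F}$. Organizing this bookkeeping so that the final estimate reads cleanly as $\normVh{\uh}\leq \nicefrac{1}{\mu_c}\normVhstar{F}$ is the delicate point; the remaining steps are routine Cauchy--Schwarz together with the invertibility argument above.
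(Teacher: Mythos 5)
Your proposal is correct, and its analytical core --- \lemref{lemma:coercivity-stability} applied to $\bformh(\uh,\uh)$, together with the continuity bound $|\lformh(\vh)|\leq\normVhstar{F}\,\normVh{\vh}$ obtained by splitting off the stabilization pairing, applying Cauchy--Schwarz with the weight $\glspar^{\halfnice}$, and absorbing the plain $L^2$ term through a Poincar\'e inequality --- is exactly the paper's argument, down to the definition of $\normVhstar{F}$. You diverge only on existence and uniqueness: the paper simply cites the Lax--Milgram lemma once \lemref{lemma:boundedness} and \lemref{lemma:coercivity-stability} are in hand, whereas you give the elementary finite-dimensional version: lift the data $(u_0,g)$ by a fixed $\uh^g\in\spaceVhd$, note that coercivity forces injectivity of the resulting square system on $\spaceVh$ (with $\visco>0$ and the homogeneous conditions on $\bit\cup\bsp$ making $\normVh{\cdot}$ a genuine norm there), and conclude surjectivity. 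This more pedestrian route buys two points of care that the paper glosses over: it treats the affine space $\spaceVhd$ explicitly (the paper asserts uniqueness ``in $\spaceVh$'' although the solution is sought in $\spaceVhd$), and it makes explicit that the stated estimate, which tests with $\vh=\uh$ in the coercivity lemma, presumes homogeneous data so that $\uh\in\spaceVh$. One caution cuts both ways: the absorption $\norm{\uh}\leq\poincareConstant\norm{\grad\uh}$ reaches $\normVh{\uh}$ only through the weighted term $\visco\norm{\grad\uh}^2$, so a factor $\poincareConstant\visco^{-\halfnice}$ appears unless it is folded into the definition of $\normVhstar{F}$; the paper's own proof writes $\norm{\vh}\leq\norm{\grad\vh}$ and silently drops this factor, so the ``delicate bookkeeping'' you flag is genuinely delicate, and your account of it is, if anything, more honest than the paper's.
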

\begin{proof}
	From \eqref{def:discrete-linear-form}, we have
	\begin{align*}
	|\lformh(\vh)| &\leq |\inner{F}{\vh}| + |\innerh{F}{\glspar[\vht + \adiv \vh- \visco\laplacian\vh]}| \\
	&\leq \norm{F}\norm{\vh} + \normh{\glspar^{\halfnice} F} \normh{\glspar^{\halfnice} \opLT \vh} \\
	&\leq \left[ \norm{F}^2 + \normh{\glspar^{\halfnice} F}^2 \right]^{\halfnice} \left[ \norm{\vh}^2 + \normh{\glspar^{\halfnice} \opLT \vh}^2 \right]^{\halfnice}.
	\end{align*}
	Defining
	\begin{align*}
	 \normVhstar{F} = \left[ \norm{F}^2 + \normh{\glspar^{\halfnice} F}^2 \right]^{\halfnice},
	\end{align*}
	and using $ \norm{\vh} \leq \norm{\grad \vh} $, we have
	\begin{align} \label{eq:lh-bound}
	|\lformh(\vh)| &\leq \normVhstar{F} \normVh{\vh}.
	\end{align}
	This proves that the linear form $ \lformh $ is continuous. Since by \lemref{lemma:boundedness} and \lemref{lemma:coercivity-stability}, we also have that the discrete bilinear form $ \bformh $ is both bounded and coercive, therefore by the Lax-Milgram Lemma \cite{larson2013finite, jt1976introduction}, \eqref{def:discrete-variational-form} has a unique solution in $ \spaceVh $. Moreover, using \lemref{lemma:coercivity-stability} \eqref{def:discrete-variational-form}, and \eqref{eq:lh-bound}, we have
	\begin{align}
		\mu_c \normVh{\uh}^2 \leq \bformh(\uh, \uh) = \lformh(\uh) \leq \normVhstar{F} \normVh{\uh},\ \ \text{i.e.,}\ \mu_c \normVh{\uh} \leq \normVhstar{F}.
	\end{align}
\end{proof}

\subsubsection{A priori error analysis}
In this section, we derive \textit{a priori} error estimates for the GLS stabilized space-time advection diffusion equation. In \eqref{pde:linear-advection-diffusion}, we assume that the exact solution $ u \in \spaceV \cap H^{s}(\stDom) $ (using regularity estimates from Section 6.3 in \cite{evans2022partial}), and consequently the forcing  $ F \in L^2(\stDom) \cap H^{s-2}(\stDom) $. To establish the error estimates, we consider an interpolant $ \interp{u} : \spaceV \cap H^{s}(\stDom) \rightarrow \spaceVh $ associated with the finite element mesh $\mesh$, such that $\interp{u}$ is nodally exact, i.e., $\interp{u}(\xvec_i) = u(\xvec_i)$ where $\xvec_i$ is a regular nodal point in $\mesh$. Then we have the following estimates for the distance between $ u $ and $ \interp{u} $.
\begin{lemma}[Approximation estimate in a Sobolev space]\label{estimate:sobolev-approximation} Assume $\mesh$ be a given mesh where \eqref{eq:shape-regularity-condition} holds.
Let $\interpolant$ be the interpolation operator from $ \spaceV \cap H^{s}(\stDom) $ to $\spaceVh$, with order of interpolation $k \in \mathbb{N}$. Also, let $ l \in \mathbb{N} $. If $k, l, s $ satisfy $0\leq l \leq (k+1) \leq s $, then
\begin{align*}
    \normH[l]{v-\interp{v}}{K}\leq \approxConstant h^{(k+1-l)}\seminormH[k+1]{v}{K}\ \forall K \in \mesh,
\end{align*}
where the constant $\approxConstant$ only depends on $k,l,s$ and is independent of $h$ and $v$.
\end{lemma}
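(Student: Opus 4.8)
The plan is to reduce the claim to a single estimate on a fixed reference element via an affine scaling argument, and then to invoke the Bramble--Hilbert lemma. Since every element $K\in\mesh$ is a $(\nsd+1)$-dimensional hypercube obtained by bisection, there is an affine map $F_K:\hat K\to K$, $F_K(\hat{\xvec})=B_K\hat{\xvec}+\mvec{b}_K$, from a fixed reference hypercube $\hat K$, with $B_K$ diagonal. Pulling functions back by $\hat v:=v\circ F_K$ and defining the reference interpolant $\hat I$ through $\widehat{\interp v}=\hat I\hat v$, the whole estimate follows from one inequality on $\hat K$ together with the change-of-variables scaling relations for Sobolev seminorms.

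First I would record the scaling relations. For the affine map above and any integer $m\ge 0$,
\[
|\hat v|_{H^m(\hat K)}\le C\,\|B_K\|^{m}\,|\det B_K|^{-1/2}\,\seminormH[m]{v}{K},
\]
and symmetrically $\seminormH[m]{v}{K}\le C\,\|B_K^{-1}\|^{m}\,|\det B_K|^{1/2}\,|\hat v|_{H^m(\hat K)}$. The shape-regularity condition \eqref{eq:shape-regularity-condition} gives $\|B_K\|\le C h_K$, $\|B_K^{-1}\|\le C\shapeRegularConst/h_K$, and $|\det B_K|\sim h_K^{\nsdt}$, so each factor becomes a clean power of $h_K$.

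Next, on the reference element I would apply the Bramble--Hilbert lemma. The error operator $\hat E:=\mathrm{id}-\hat I$ is a bounded linear map from $H^{k+1}(\hat K)$ into $H^l(\hat K)$ --- boundedness of $\hat I$ holds because nodal interpolation is continuous whenever $H^{k+1}(\hat K)\hookrightarrow C^0(\hat K)$, which is valid for the dimensions $\nsdt\le 4$ considered here once $k+1>\nsdt/2$ --- and $\hat E$ annihilates $P_k(\hat K)$ since $\hat I$ reproduces tensor-product polynomials of coordinate degree $k$. Hence, for every $\hat p\in P_k(\hat K)$,
\[
\|\hat v-\hat I\hat v\|_{H^l(\hat K)}=\|\hat E(\hat v+\hat p)\|_{H^l(\hat K)}\le\|\hat E\|\,\|\hat v+\hat p\|_{H^{k+1}(\hat K)},
\]
and minimizing the right-hand side over $\hat p$ (the Deny--Lions argument underlying Bramble--Hilbert) yields $\|\hat v-\hat I\hat v\|_{H^l(\hat K)}\le\hat C\,|\hat v|_{H^{k+1}(\hat K)}$, with $\hat C$ depending only on $\hat K$, $k$, and $l$.

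Finally I would chain the three bounds: map $v-\interp v$ to $\hat K$, apply Bramble--Hilbert, and map back, collecting the Jacobian factors as
\[
\normH[l]{v-\interp v}{K}\le C\,|\det B_K|^{1/2}\,\|B_K^{-1}\|^{l}\,\hat C\,|\hat v|_{H^{k+1}(\hat K)}\le C\,h_K^{-l}\,h_K^{k+1}\,\seminormH[k+1]{v}{K},
\]
where the two $|\det B_K|^{\pm1/2}$ factors cancel and the surviving power is exactly $h_K^{k+1-l}$. Since $l\le k+1$ the exponent is nonnegative, so $h_K\le h$ gives the stated bound with $h$; absorbing all mesh-independent constants into $\approxConstant$ (depending only on $k,l,s$ and $\shapeRegularConst$) completes the proof. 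I expect the only genuinely delicate point to be the boundedness of the nodal interpolant on $\hat K$, i.e.\ justifying that point evaluations are continuous functionals on $H^{k+1}(\hat K)$; the rest is bookkeeping of the affine scaling together with a citation of Bramble--Hilbert.
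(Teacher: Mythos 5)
Your proposal is correct and coincides with the paper's own route: the paper proves this lemma simply by citing Section 4.4 of Brenner--Scott and Section 6.7 of Oden--Reddy, which carry out exactly your argument --- affine pull-back to a reference hypercube, scaling of Sobolev seminorms under shape regularity, and the Bramble--Hilbert (Deny--Lions) lemma applied to the error operator that annihilates $P_k$, with continuity of nodal interpolation supplied by the Sobolev embedding. Your hedge ``once $k+1 > \nsdt/2$'' is indeed necessary (for $\nsdt = 4$ and $k = 1$ the embedding $H^2(\hat K)\hookrightarrow C^0(\hat K)$ is borderline and fails), but this does not affect the paper's setting, whose computations use $\nsdt = 3$.
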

\begin{proof}
The proof follows from the technique outlined in Section 4.4 of \cite{brenner2007mathematical} and Section 6.7 of \cite{jt1976introduction}.
\end{proof}
\begin{remark}
The following estimates are direct consequences of \lemref{estimate:sobolev-approximation} on any element $ K \in \mesh $ (assuming $s \geq k+1$).
\begin{subequations}\label{estimate:sobolev-interp}
	\begin{align}
	\normL{v-\interp{v}}{K} &\leq {\approxConstant}_0 \he^{k+1}\seminormH[k+1]{v}{K},
	\label{estimate:sobolev-interp-L2}\\
	\normL{\grad (v-\interp{v})}{K} &\leq {\approxConstant}_1 \he^{k}\seminormH[k+1]{v}{K},
	\label{estimate:sobolev-interp-grad-L2}\\
	\normL{\partial_t(v-\interp{v})}{K} &\leq {\approxConstant}_1 \he^{k}\seminormH[k+1]{v}{K},
	\label{estimate:sobolev-interp-time-der-L2} \\
	\normL{\laplacian (v-\interp{v})}{K} &\leq {\approxConstant}_2 \he^{k-1}\seminormH[k+1]{v}{K}.
	\label{estimate:sobolev-interp-laplacian-L2}
	\end{align}
\end{subequations}
\begin{corollary} \label{lemma:interp-boundary-estimate}
	For the final time boundary $ \bft $, we similarly have
	\begin{align}
	\normL{v-\interp{v}}{\partial K} 
	&\leq C_{a\Gamma} \he^{k+\frac{1}{2}}|v|_{H^{k+1}(K)}.
	\label{estimate:sobolev-interp-boundary-L2}
	\end{align}
\end{corollary}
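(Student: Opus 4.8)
The plan is to obtain this boundary estimate by combining a scaled (multiplicative) trace inequality with the interior approximation estimates \eqref{estimate:sobolev-interp-L2} and \eqref{estimate:sobolev-interp-grad-L2} already established. First I would establish the scaled trace inequality: for any $w \in H^1(K)$ and any $K \in \mesh$,
\begin{align*}
\normL{w}{\partial K}^2 \leq \traceConstant\left( \he^{-1}\normL{w}{K}^2 + \he\,\normL{\grad w}{K}^2 \right),
\end{align*}
where $\traceConstant$ depends only on the shape-regularity constant $\shapeRegularConst$. This is proved by mapping $K$ to a fixed reference hypercube $\hat{K}$ of unit size, applying the standard trace theorem $\|\hat{w}\|_{L^2(\partial\hat{K})} \le C\|\hat{w}\|_{H^1(\hat{K})}$ there, and tracking the powers of $\he$ produced by the affine change of variables. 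Under this scaling a $\nsdt$-dimensional volume integral carries a factor $\he^{\nsdt}$, a $(\nsdt-1)$-dimensional surface integral carries $\he^{\nsdt-1}$, and each gradient contributes $\he^{-1}$; after substituting back, the dimension $\nsdt$ cancels and only the factors $\he^{-1}$ and $\he$ survive. The shape-regularity condition \eqref{eq:shape-regularity-condition} is exactly what guarantees that the reference map is uniformly nondegenerate, so that $\traceConstant$ is independent of $K$.

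Next I would apply this inequality to $w = v - \interp{v}$ and insert the interpolation estimates. The first term gives, using \eqref{estimate:sobolev-interp-L2},
\begin{align*}
\he^{-1}\normL{v-\interp{v}}{K}^2 \leq \he^{-1}\left({\approxConstant}_0\, \he^{k+1}\seminormH[k+1]{v}{K}\right)^2 = {\approxConstant}_0^2\, \he^{2k+1}\seminormH[k+1]{v}{K}^2,
\end{align*}
while the second term gives, using \eqref{estimate:sobolev-interp-grad-L2},
\begin{align*}
\he\,\normL{\grad(v-\interp{v})}{K}^2 \leq \he\left({\approxConstant}_1\, \he^{k}\seminormH[k+1]{v}{K}\right)^2 = {\approxConstant}_1^2\, \he^{2k+1}\seminormH[k+1]{v}{K}^2.
\end{align*}
The key observation is that both contributions carry the \emph{same} power $\he^{2k+1}$, so they combine cleanly. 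Summing and taking the square root yields
\begin{align*}
\normL{v-\interp{v}}{\partial K} \leq C_{a\Gamma}\, \he^{k+\halfnice}\seminormH[k+1]{v}{K},
\end{align*}
with $C_{a\Gamma}^2 = \traceConstant\left({\approxConstant}_0^2 + {\approxConstant}_1^2\right)$. Restricting the left-hand side to the portion of $\partial K$ lying on the final-time boundary $\bft$ only shrinks the integration set, so the same bound holds there.

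The main obstacle is the scaled trace inequality itself — in particular verifying that the two opposite powers of $\he$ (namely $\he^{-1}$ and $\he^{+1}$) emerge correctly from the scaling, and that the constant can be made uniform over the mesh via shape regularity. Once that inequality is in hand, the remainder is the routine bookkeeping of matching $\he$ powers, and the fact that the two interpolation contributions scale identically as $\he^{2k+1}$ is precisely what produces the half-integer exponent $k+\halfnice$ in the final estimate.
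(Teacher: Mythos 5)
Your proposal is correct and follows essentially the same route as the paper: the element-wise scaled trace inequality $\normL{w}{\partial K}^2 \leq C\left( \he^{-1}\normL{w}{K}^2 + \he\normL{\grad w}{K}^2 \right)$ applied to $w = v-\interp{v}$, followed by the interpolation estimates \eqref{estimate:sobolev-interp-L2} and \eqref{estimate:sobolev-interp-grad-L2}, with the identical constant $C_{a\Gamma}^2 = C\left({\approxConstant}_0^2 + {\approxConstant}_1^2\right)$. The only difference is cosmetic: the paper cites the trace inequality from the literature while you sketch its proof by reference-element scaling, and you additionally make explicit the (trivial) restriction from $\partial K$ to the portion on $\bft$.
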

\begin{proof}
	See \ref{sec:proof-interp-boundary}.
\end{proof}
\end{remark}
\begin{lemma}\label{lemma:interp-estimate-in-Vh}
Let $ u \in \spaceV \cap H^{s}(\stDom) $, and suppose $ \interpolant: \spaceV \cap H^{s}(\stDom) \rightarrow \spaceVh  $ is the projection of $u$ from $\spaceV$ to $\spaceVh$. Assume that $ \interpolant $ has the degree of interpolation $ k \in \mathbb{N} $ such that $ k+1 \leq s $. If $ s \geq 2 $, then the following estimate holds,
\begin{align*}
    \normVh{\interp{u}-u} \leq C h^{k}\seminormH[k+1]{u}{\stDom}
\end{align*}
for some $C>0$.
\end{lemma}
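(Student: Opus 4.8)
The plan is to set $\er := \interp{u} - u$ and estimate the three contributions to $\normVh{\er}^2$ separately, namely the final-time boundary term $\norm{\er}_{\bft}^2$, the diffusion term $\visco\norm{\grad \er}^2$, and the GLS stabilization term $\normh{\glspar^{\halfnice}[\partial_t \er + \adiv \er - \visco\laplacian \er]}^2$, showing each is $O(h^{2k})\seminormH[k+1]{u}{\stDom}^2$. All three are assembled by summing the elementwise interpolation estimates \eqref{estimate:sobolev-interp} and \eqref{estimate:sobolev-interp-boundary-L2} over $K \in \mesh$. Since the quantity being bounded is a norm, working with $\interp{u} - u$ or $u - \interp{u}$ is immaterial.

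The diffusion and boundary terms are immediate. For the diffusion term, \eqref{estimate:sobolev-interp-grad-L2} gives $\normL{\grad \er}{K} \le \approxConstant_1 \he^{k}\seminormH[k+1]{u}{K}$; squaring and summing over elements yields $\visco\norm{\grad\er}^2 \le \visco \approxConstant_1^2 h^{2k}\seminormH[k+1]{u}{\stDom}^2$. For the boundary term, only the elements carrying a face on $\bft$ contribute, and \eqref{estimate:sobolev-interp-boundary-L2} supplies a factor $\he^{k+\halfnice}$ per element, so $\norm{\er}_{\bft}^2 \le C h^{2k+1}\seminormH[k+1]{u}{\stDom}^2$, which is of even higher order in $h$ and is absorbed into the $h^{2k}$ rate (using that the mesh size is bounded).

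The crux is the stabilization term, because $\glspar$ multiplies a residual containing the Laplacian $\visco\laplacian\er$, whose interpolation estimate \eqref{estimate:sobolev-interp-laplacian-L2} only scales like $\he^{k-1}$, two powers short of optimal. The resolution is the scaling of the stabilization parameter from \eqref{eq:glspar-formula}: since $c_1,c_2>0$ and both bracketed terms are nonnegative, one has $\glspar_K \le \he^2/(c_1\visco)$ and $\glspar_K \le \he/(c_2\norm{\advst})$. I would split the residual by the triangle inequality into its time-derivative, advection, and diffusion parts and bound each with \eqref{estimate:sobolev-interp-time-der-L2}, \eqref{estimate:sobolev-interp-grad-L2} (using boundedness of $\adv$ for $\adiv\er=\adv\cdot\grad\er$), and \eqref{estimate:sobolev-interp-laplacian-L2}. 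The Laplacian piece contributes $\glspar_K\,\visco^2\approxConstant_2^2\he^{2k-2}\seminormH[k+1]{u}{K}^2$; applying $\glspar_K \le \he^2/(c_1\visco)$ cancels the $\visco$ and the two extra powers of $\he$, giving exactly $O(\he^{2k})\seminormH[k+1]{u}{K}^2$. The time-derivative and advection pieces scale as $\he^{k}$ inside the norm, so with $\glspar_K \le \he/(c_2\norm{\advst})$ they even yield $O(\he^{2k+1})$.

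Summing over $K \in \mesh$ and combining the three bounds gives $\normVh{\er}^2 \le C h^{2k}\seminormH[k+1]{u}{\stDom}^2$; taking square roots yields the claim. The main obstacle is the stabilization term, specifically verifying that the $\he^2/\visco$ scaling of $\glspar_K$ exactly compensates the two-power loss in the Laplacian interpolation estimate; the remaining pieces are routine summation of the elementwise estimates of the preceding remark. The hypothesis $s \geq 2$ (equivalently $k \geq 1$) is precisely what makes \eqref{estimate:sobolev-interp-laplacian-L2} available.
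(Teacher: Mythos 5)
Your proposal is correct and follows essentially the same route as the paper's proof: the same three-term decomposition of $\normVh{\interp{u}-u}^2$ via the boundary, diffusion, and stabilization contributions, the same elementwise estimates \eqref{estimate:sobolev-interp} and \eqref{estimate:sobolev-interp-boundary-L2}, and the same key observation that the scaling $\glspar_K \lesssim \he^2/\visco$ from \eqref{eq:glspar-formula} exactly absorbs the two-power loss in the Laplacian estimate. The only difference is bookkeeping: the paper keeps $\partial_t + \adiv$ together as the space-time advective derivative $\adivst$ and bounds the single ratio $\glspar_K\left[ \approxConstant_1 \normL[2]{\advst}{K}\he + \visco\approxConstant_2 \right]^2 \leq C\he^2$ in one step, whereas you split the residual by the triangle inequality and pair each piece with the matching elementary upper bound on $\glspar_K$ --- the same estimate in slightly different clothing.
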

\begin{proof}
	See \ref{proof:interp-estimate-vh}.
\end{proof}

\begin{theorem}\label{theorem:a-priori-estimate}
Assume that $u \in V\cap H^{s}(\stDom)$ is the exact solution to \eqref{pde:linear-advection-diffusion} and $\uh\in V_h$ is the solution to the finite element problem \eqref{def:discrete-variational-form}. If the degree of the basis functions in $ \spaceVh $ is $ k $, and the element-wise stabilization function is given by \eqref{eq:glspar-formula}, then the discretization error estimate satisfies,
\begin{align*}
    \normVh{\uh-u}\leq C h^{k}\normH[k+1]{u}{\stDom}
\end{align*}
for some $C>0$.
\end{theorem}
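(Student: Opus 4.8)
The plan is to run the standard C\'ea-type argument for stabilized methods: split the total error through the interpolant $\interp{u}$, control the interpolation part with \lemref{lemma:interp-estimate-in-Vh}, and control the discrete part with the coercivity/boundedness pair of \lemref{lemma:coercivity-stability} and \lemref{lemma:boundedness} together with Galerkin orthogonality. Concretely, I would write $\uh - u = (\uh - \interp{u}) + (\interp{u} - u)$ and set $\eh := \uh - \interp{u} \in \spaceVh$, so that by the triangle inequality $\normVh{\uh - u} \le \normVh{\eh} + \normVh{\interp{u} - u}$. The second summand is already $O(h^{k})$ by \lemref{lemma:interp-estimate-in-Vh}, so the whole task reduces to bounding the discrete part $\normVh{\eh}$ at the same rate.

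For the discrete part, coercivity (\lemref{lemma:coercivity-stability}) gives $\mu_c \normVh{\eh}^2 \le \bformh(\eh, \eh)$. Since $\eh \in \spaceVh$ is an admissible test function, Galerkin orthogonality (\remref{remark:galerkin-orthogonality}, which holds because $s \ge 2$ makes the formulation consistent) lets me subtract the exact solution in the linear first slot, $\bformh(\eh, \eh) = \bformh(\uh - \interp{u}, \eh) = \bformh(u - \interp{u}, \eh)$, trading the unknown $\uh$ for the computable interpolation error. Applying boundedness (\lemref{lemma:boundedness}) to this expression, with the interpolation error placed in the argument that carries the stronger $\normVhstar{\cdot}$ norm and the factor $\eh$ in the $\normVh{\cdot}$ slot, yields $\mu_c\normVh{\eh}^2 \le \mu_b \normVhstar{u - \interp{u}}\,\normVh{\eh}$, hence $\normVh{\eh} \le (\mu_b/\mu_c)\,\normVhstar{u - \interp{u}}$.

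It then remains to show $\normVhstar{u - \interp{u}} = O(h^{k})$. Writing $\normVhstar{u - \interp{u}}^2 = \norm{\partial_t(u - \interp{u})}^2 + \normVh{u - \interp{u}}^2$, the second summand is again controlled by \lemref{lemma:interp-estimate-in-Vh}, while the extra term $\norm{\partial_t(u - \interp{u})}$ is bounded elementwise by the time-derivative interpolation estimate \eqref{estimate:sobolev-interp-time-der-L2} and summed over $\mesh$; both contribute at order $h^{k}\seminormH[k+1]{u}{\stDom}$. Combining this with the triangle-inequality split and enlarging the seminorm to the full $H^{k+1}$-norm produces the claimed bound $\normVh{\uh - u} \le C h^{k}\normH[k+1]{u}{\stDom}$.

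The hard part will be the norm mismatch in the boundedness step: coercivity only delivers control in $\normVh{\cdot}$, whereas boundedness naturally introduces the stronger $\normVhstar{\cdot}$ (the $\norm{\partial_t \cdot}$ contribution) on one argument. The estimate closes only because that stronger norm can be forced onto the interpolation error $u - \interp{u}$, where $\norm{\partial_t(u-\interp{u})}$ is still $O(h^{k})$, rather than onto the discrete error $\eh$, for which no uniform-in-$h$ bound $\normVhstar{\eh} \lesssim \normVh{\eh}$ can be expected (the stabilization weight $\glspar_K = O(h_K^2)$ degenerates as $h \to 0$, so the $\partial_t \eh$ hidden in the least-squares term is not recoverable). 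Getting the orientation of \lemref{lemma:boundedness} right, and verifying that every constant generated along the way stays independent of $h$ for the $\glspar$ of \eqref{eq:glspar-formula}, is the delicate bookkeeping at the heart of the proof.
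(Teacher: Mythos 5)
Your proposal is correct, and it follows the same C\'ea-type skeleton as the paper's proof in \ref{sec:proof-a-priori}: the split $\uh-u=\eh+\ei$ with $\eh=\uh-\interp{u}$ and $\ei=\interp{u}-u$, coercivity of $\bformh$ applied to $\eh$, Galerkin orthogonality to trade $\bformh(\eh,\eh)$ for $-\bformh(\ei,\eh)$, boundedness, and the interpolation estimates of \lemref{lemma:interp-estimate-in-Vh}. The one substantive difference is precisely the point you flag as delicate. The paper applies \lemref{lemma:boundedness} in its stated orientation, obtaining $\mu_c\normVh{\eh}^2\le\mu_b\,\normVh{\ei}\,\normVhstar{\eh}$, and then passes directly to $\normVh{\eh}\le Ch^{k}\seminormH[k+1]{u}{\stDom}$ --- a step that does not literally close, since, as you correctly observe, $\normVhstar{\eh}$ cannot be absorbed into $\normVh{\eh}$ uniformly in $h$: the $\norm{\partial_t\eh}$ contribution is not controlled by the $\glspar$-weighted residual term once $\glspar_K=O(h_K^2)$. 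Your version instead puts the star norm on the interpolation error, $\mu_c\normVh{\eh}^2\le\mu_b'\,\normVhstar{\ei}\,\normVh{\eh}$, which divides out cleanly and is finished by $\normVhstar{\ei}\le Ch^{k}\seminormH[k+1]{u}{\stDom}$ --- exactly the auxiliary estimate the paper prepares (``we can similarly show \ldots'') but never visibly uses, so your reading almost certainly recovers the argument the authors intended. The only thing to make explicit is that this flipped orientation is not literally \lemref{lemma:boundedness} as stated (there the star norm sits on the second, discrete argument): you need the companion bound $|\bformh(w,\vh)|\le\mu_b'\normVhstar{w}\,\normVh{\vh}$, which in fact has a simpler proof than the paper's lemma --- do not integrate $(\partial_t w,\vh)$ by parts, but estimate it directly by Cauchy--Schwarz and Poincar\'e, $\norm{\vh}\le\poincareConstant\norm{\grad\vh}$, with the same treatment of the advection, diffusion and stabilization terms, giving a constant depending on $\visco$ and $\normH[1]{\adv}{\stDom}$ exactly as before. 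With that one-line variant supplied, your argument is complete, every constant is $h$-independent for the $\glspar$ of \eqref{eq:glspar-formula}, and your write-up is in fact more careful than the paper's at the very step where its written proof has a gap.
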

\begin{proof}
	See \ref{sec:proof-a-priori}.
\end{proof}


\subsubsection{A posteriori error analysis}
In addition to solving \eqref{def:discrete-bilinear-form} in space-time, we would also like to perform adaptive refinement of the space-time mesh $ \mesh $. To this end, we use the following residual-based \textit{a posteriori} error indicator for each element $ K $ in $ \mesh $:
\begin{align}
\etaElm = \left[ h_K^2\|r\|_K^2 + \frac{1}{2}\Xsum_{E\in\mathcal{E}_K}h_E\|j\|_E^2 \right]^{\halfnice},
\label{eq:eta-element}
\end{align}
where $ \he $ is the size of the element $ K $, and $ \mathcal{E}_K $ is the set of all the boundaries of element $ K $. The PDE-residual $ r $ and the jump-residual $ j $ are defined as
\begin{subequations}
	\begin{align}
	r|_K &= F - \opLT \uh \ \ \text{on every} \ K \in \mesh, \\
	j|_E &= \begin{cases}
	0\ \ \text{if} \ E \in \bit \cup \bsp,\\
	-\jump(\visco\normalvec_E\cdot\grad\uh),\ \ \text{otherwise}.
	\end{cases}
	\end{align}
\end{subequations}
The jump operator $ \jump $ acting on a function $ \vh $ is defined as
\begin{align}
\jump_E \vh(\xvec) = \lim_{t\rightarrow 0^+} \vh(\xvec - t\normalvec_E) - \lim_{t\rightarrow 0^+} \vh(\xvec + t\normalvec_E) ,\ E \in \edgeset,
\end{align}
$\edgeset$ being the set of all the edges of all the elements in the mesh $\mesh$. The cumulative error indicator $ \eta $ can be defined as
\begin{align}
\etaFull := \left(\elmsum \etaElm^2 \right)^{\halfnice}.
\end{align}
The next theorem shows that the cumulative error indicator bounds the true error. A similar error estimate has been used for the heat equation in \cite{steinbach20197}.

\begin{theorem}\label{theorem:a-posteriori-estimate}
If $ u $ is the solution of \eqref{pde:linear-advection-diffusion} and $ \uh \in \spaceVh $ is the solution of \eqref{def:discrete-variational-form}, then 
	\begin{align}
	\normVhstar{u-\uh} 
	&\leq C\etaFull.
	\end{align}
\end{theorem}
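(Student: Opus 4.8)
The plan is to set $e:=u-\uh$, which lies in $\spaceV$ because $u\in\spaceVd$ and $\uh\in\spaceVhd$ carry the same data on $\bit\cup\bsp$, and to bound the four contributions of $\normVhstar{e}^2=\norm{\partial_t e}^2+\normVh{e}^2=\norm{\partial_t e}^2+\norm{e}_{\bft}^2+\visco\norm{\grad e}^2+\normh{\glspar^{\halfnice}\opLT e}^2$ one at a time. The anchor of the argument is the exact identity $\bform(e,e)=\halfnice\norm{e}_{\bft}^2+\visco\norm{\grad e}^2$, which I would obtain by testing the Galerkin part of the residual against $e$ itself: the temporal term gives $\halfnice(\norm{e}_{\bft}^2-\norm{e}_{\bit}^2)=\halfnice\norm{e}_{\bft}^2$ since $e|_{\bit}=0$, while $\inner{\adiv e}{e}$ vanishes because $\adv\in\spaceVdiv$ is divergence free, $e|_{\bsp}=0$, and $\adv\cdot\normalvec=0$ on the constant-time face $\bft$. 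This already surfaces two of the four terms, so it remains to bound $\bform(e,e)$ from above by the indicator and to treat the stabilization and $\partial_t e$ terms separately.

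Next I would establish the residual representation. For any $v\in\spaceV$, integrating $\visco\inner{\grad\uh}{\grad v}$ by parts element-by-element yields $\bform(e,v)=\elmsum\inner{r}{v}_K+\edgesum\inner{j}{v}_E$, with $r=F-\opLT\uh$ and $j=-\jump(\visco\normalvec_E\cdot\grad\uh)$; the faces on $\bit\cup\bsp$ drop out since $v=0$ there, and those on $\bft$ drop out since the spatial normal derivative vanishes on a constant-time face. I then split $e=(e-\Pi_h e)+\Pi_h e$, where $\Pi_h$ is a Cl\'ement/Scott--Zhang quasi-interpolant into $\spaceVh$ (needed since $e$ is only globally $H^1$, so the nodal $\interpolant$ of \lemref{estimate:sobolev-approximation} does not apply). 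On $e-\Pi_h e$ I combine the representation with the local bounds $\norm{e-\Pi_h e}_K\leq C\he\norm{\gradst e}_{\widetilde K}$ and $\norm{e-\Pi_h e}_E\leq Ch_E^{\halfnice}\norm{\gradst e}_{\widetilde E}$; on $\Pi_h e$ I invoke Galerkin orthogonality (\remref{remark:galerkin-orthogonality}), which turns $\bform(e,\Pi_h e)$ into $-\elmsum\glspar_K\inner{r}{\opLT\Pi_h e}_K$ and is controlled by Cauchy--Schwarz, an inverse estimate for $\opLT\Pi_h e$, and $H^1$-stability of $\Pi_h$. Summing over elements with Cauchy--Schwarz and finite patch overlap, and noting $\norm{\gradst e}\leq C\normVhstar{e}$, gives $\bform(e,e)\leq C\etaFull\normVhstar{e}$.

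The stabilization contribution is immediate: since $\opLT e=r$ elementwise and \eqref{eq:glspar-formula} gives $\glspar_K\leq\he^2/(c_1\visco)$, one has $\normh{\glspar^{\halfnice}\opLT e}^2=\elmsum\glspar_K\norm{r}_K^2\leq(c_1\visco)^{-1}\elmsum\he^2\norm{r}_K^2\leq C\etaFull^2$.

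The main obstacle is $L^2$-control of $\norm{\partial_t e}$, the one term not reached by the identity of the first step. The naive route via $\partial_t e=r-\adiv e+\visco\laplacian e$ is too lossy, since $\norm{r}_K$ (in place of $\he\norm{r}_K$) and $\norm{\laplacian e}_K$ carry the wrong powers of $h$. Instead I would test this elementwise relation against $\partial_t e$ and integrate $\visco\inner{\laplacian e}{\partial_t e}_K$ by parts in space: the interior constant-time faces cancel because the tangential (spatial) gradient of the $C^0$ field $\uh$ is continuous across them, the initial face contributes nothing since $e|_{\bit}=0$, the final face produces $\halfnice\visco\seminormH[1]{e(\cdot,T)}{\spDom}^2$ (absorbed into the already-controlled $\visco\norm{\grad e}^2$), and the remaining inter-element spatial faces regenerate exactly the jump functional $j$ present in $\etaFull$; this yields $\norm{\partial_t e}^2\leq C\etaFull\normVhstar{e}+C\etaFull^2$. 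Adding the four estimates gives $\normVhstar{e}^2\leq C_1\etaFull\normVhstar{e}+C_2\etaFull^2$, and a Young inequality $C_1\etaFull\normVhstar{e}\leq\halfnice\normVhstar{e}^2+\halfnice C_1^2\etaFull^2$ absorbs the cross term to deliver $\normVhstar{u-\uh}\leq C\etaFull$. I expect the jump-and-final-time bookkeeping in this $\partial_t e$ estimate, along with tracking the $\visco$-dependence through the inverse and interpolation constants, to be the most delicate part.
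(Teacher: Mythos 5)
Your overall architecture is genuinely different from the paper's: the paper never decomposes the error norm into its four constituents, but instead bounds the weak residual functional $\innerh{R(\uh)}{v}=\bformh(u-\uh,v)$ for arbitrary $v\in\spaceV$ by inserting $v-\interp{v}$ via Galerkin orthogonality, obtaining $\innerh{R(\uh)}{v}\leq C\etaFull h^k\seminormH[k+1]{v}{\stDom}$, and then converts this into a bound on $\normVhstar{u-\uh}$ through the boundedness constant $\mu_b$ and a Verf\"urth-type sup characterization of the error norm. Your first three steps are sound and in one respect an improvement: the coercivity identity $\bform(e,e)=\halfnice\norm{e}_{\bft}^2+\visco\norm{\grad e}^2$, the residual representation, and the Cl\'ement/Scott--Zhang split with the stabilization correction $\bform(e,\Pi_h e)=-\elmsum\glspar_K\inner{r}{\opLT\Pi_h e}_K$ are all correct, and your replacement of the nodal interpolant by a quasi-interpolant repairs a real weakness of the paper's own argument, which applies the estimates of \lemref{estimate:sobolev-approximation} (requiring $v\in H^{k+1}$) to a test function that is only in $\spaceV\subset H^1(\stDom)$. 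The stabilization-term bound via $\glspar_K\leq\he^2/(c_1\visco)$ is also fine.

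The genuine gap is the $\norm{\partial_t e}$ estimate, and it does not appear reparable as described. After testing $\partial_t e=r-\adiv e+\visco\laplacian e$ against $\partial_t e$, the source term is $\innerh{r}{\partial_t e}\leq\normh{r}\,\norm{\partial_t e}$, and $\normh{r}$ is the \emph{unweighted} residual; the estimator only controls $\left(\elmsum\he^2\normElm{r}^2\right)^{\halfnice}$, which is weaker by a factor of $h^{-1}$, so no Young inequality can produce $\norm{\partial_t e}^2\leq C\etaFull\normVhstar{e}+C\etaFull^2$ from this identity. The regenerated face terms fail for the same structural reason: Cauchy--Schwarz against the weighted jump part of $\etaFull$ forces the pairing $\edgesum\inner{j}{\partial_t e}_E\leq\left(\edgesum h_E\norm{j}_E^2\right)^{\halfnice}\left(\edgesum h_E^{-1}\norm{\partial_t e}_E^2\right)^{\halfnice}$, and the second factor is uncontrollable: $\partial_t e$ is not a discrete function (so inverse estimates do not apply to it), and $\normVhstar{e}$ supplies only $\norm{\partial_t e}_{L^2(\stDom)}$ with no control of $\gradst\partial_t e$, so the scaled trace inequality leaves a residual factor of order $h^{-1}\norm{\partial_t e}$. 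This failure contaminates the whole closure, because your bound on $\bform(e,e)$ already carries $\normVhstar{e}$ on the right-hand side (through the Cl\'ement bound $\norm{e-\Pi_h e}_K\leq C\he\norm{\gradst e}_{\widetilde K}$ and $\norm{\gradst e}\leq C\normVhstar{e}$), so without the $\partial_t e$ estimate not even the weaker conclusion $\normVh{e}\leq C\etaFull$ closes within your scheme. To finish you would need either the paper's duality device, which avoids ever isolating $\norm{\partial_t e}$, or a strengthened estimator containing the unweighted elementwise residual $\normElm{r}$ — indeed, the difficulty you ran into is the standard reason parabolic a posteriori theory controls $\partial_t e$ in $H^{-1}$-type norms rather than in $L^2$.
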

\begin{proof}
	See \ref{sec:proof-a-posteriori}.
\end{proof}

\section{Implementation details}
\label{sec:implementation}
In this section, we briefly discuss the implementation details. We have used the open-source FEM library \textsc{Deal.II} \cite{arndt2022deal} and our in-house program to obtain all the solutions presented in \secref{sec:numerical-examples}. We utilize the parallel implementation of finite element analysis using the standard continuous Galerkin method with globally $C^0$ Lagrangian basis functions \cite{hughes2003finite}. The underlying mesh is based on a tree-type data-structure \cite{sundar2008bottom, ishii2019solving}, where each ``leaf'' node represents a (hypercube shaped) ``finite element.'' Local refinements are achieved by bisecting the faces of the hypercube elements, and joining opposite faces. These local refinements sometimes introduce ``hanging nodes'' \cite{bank1983some, ainsworth1997aspects}. In our implementation, we treat such a node as a constrained degree of freedom, i.e., the value of the solution at a hanging node is forced to be the average of the corner nodes on the same face \cite[Sec 3.2]{ainsworth1997aspects}, \cite[Sec 5]{fries2011hanging}.

We use \petsc{} \cite{balay2019petsc} for the solution of the linear algebra problems after the finite element discretization is performed. All the problems in \secref{sec:numerical-examples} are solved using the biconjugate gradient squared (BCGS) \cite{saad2003iterative} solver from \petsc{} in combination with a preconditioner based on the additive schwarz method (ASM) \cite{saad2003iterative, brenner2007mathematical}.

\section{Numerical Examples}\label{sec:numerical-examples}
In this section, we look at three specific examples modeled by \eqref{pde:linear-advection-diffusion}. These special cases are: \textit{(i)} the heat equation ($\adv = \underline{0}\text{ , }\visco \neq 0$), \textit{(ii)} the advection-diffusion equation (both $\adv \neq \underline{0} \text{ , }\visco \neq 0$) and \textit{(iii)} the transport equation ($\adv \neq \underline{0} \text{ , } \visco=0$, which is a hyperbolic equation). 

In the first two cases, we perform a convergence study with a known smooth analytical solution and confirm the results of a priori and a posteriori errors obtained in \secref{sec:math-formulation}. Then we discuss the nature of the uniform mesh solutions for the second and third cases and present comparisons with a sequential time-marching solution. Finally, we present results on the space-time adaptive solutions for all three cases.
In all examples, we take the space-time domain to be $\stDom = [0,1]^3 $.
  
\subsection{Convergence study}\label{subsec:conv-study}
\begin{figure}[!htb]
\centering
\begin{minipage}{.49\textwidth}
  \centering
  \begin{tikzpicture}
      \begin{loglogaxis}[
          width=0.99\linewidth, 
          xlabel=$h$, 
          legend style={at={(0.95,0.05)},anchor=south east,legend columns=1}, 
          x tick label style={rotate=0,anchor=north}, 
          xtick={0.015625, 0.03125, 0.0625, 0.125},
          xticklabels={$2^{-6}$,$2^{-5}$,$2^{-4}$,$2^{-3}$},
        ]
        \addplot table[x expr={\thisrow{h}},y expr={\thisrow{eta}},col sep=comma]{results_data/convergence-results-new/heat/errors_case_3_nu_1e-2_p_1.txt};
        \addplot table[x expr={\thisrow{h}},y expr={\thisrow{err_h}},col sep=comma]{results_data/convergence-results-new/heat/errors_case_3_nu_1e-2_p_1.txt};
        \addplot table[x expr={\thisrow{h}},y expr={\thisrow{err_l2}},col sep=comma]{results_data/convergence-results-new/heat/errors_case_3_nu_1e-2_p_1.txt};
        \logLogSlopeTriangle{0.3}{0.1}{0.65}{1}{red};
        \logLogSlopeTriangle{0.3}{0.1}{0.43}{1}{blue};
        \logLogSlopeTriangle{0.3}{0.1}{0.12}{2}{brown};
        \legend{\small $\etaFull$, $\normVh{\uh-u}$, $\normL[2]{\uh-u}{\stDom}$}
      \end{loglogaxis}
    \end{tikzpicture}
    \subcaption{$\adv = (0,0),\ \visco=10^{-2}$}
    \label{fig:conv-study-heat-expdecay-1e-2}
\end{minipage}
\begin{minipage}{.49\textwidth}
	\centering
	\begin{tikzpicture}
		\begin{loglogaxis}[
			width=0.99\linewidth, 
			xlabel=$h$, 
			legend style={at={(0.95,0.05)},anchor=south east,legend columns=1}, 
			x tick label style={rotate=0,anchor=north}, 
			xtick={0.015625, 0.03125, 0.0625, 0.125},
			xticklabels={$2^{-6}$,$2^{-5}$,$2^{-4}$,$2^{-3}$},
			]
			\addplot table[x expr={\thisrow{h}},y expr={\thisrow{eta}},col sep=comma]{results_data/convergence-results-new/advdiff/errors_case_3_nu_1e-2_p_1.txt};
			\addplot table[x expr={\thisrow{h}},y expr={\thisrow{err_h}},col sep=comma]{results_data/convergence-results-new/advdiff/errors_case_3_nu_1e-2_p_1.txt};
			\addplot table[x expr={\thisrow{h}},y expr={\thisrow{err_l2}},col sep=comma]{results_data/convergence-results-new/advdiff/errors_case_3_nu_1e-2_p_1.txt};
			\logLogSlopeTriangle{0.3}{0.1}{0.42}{1.3}{blue};
			\logLogSlopeTriangle{0.3}{0.1}{0.59}{1}{red};
			\logLogSlopeTriangle{0.3}{0.1}{0.12}{1.5}{brown};
			\legend{\small $\etaFull$, $\normVh{\uh-u}$, $\normL[2]{\uh-u}{\stDom}$}
		\end{loglogaxis}
	\end{tikzpicture}
	\subcaption{$\adv = 2\pi (-y+\halfnice,\ x-\halfnice),\ \visco=10^{-2}$}
	\label{fig:conv-study-advdiff-expdecay-1e-2}
\end{minipage}
\\~\\~\\
\begin{minipage}{.49\textwidth}
	\centering
	\begin{tikzpicture}
		\begin{loglogaxis}[
			width=0.99\linewidth, 
			xlabel=$h$, 
			legend style={at={(0.95,0.05)},anchor=south east,legend columns=1}, 
			x tick label style={rotate=0,anchor=north}, 
			xtick={0.015625, 0.03125, 0.0625, 0.125},
			xticklabels={$2^{-6}$,$2^{-5}$,$2^{-4}$,$2^{-3}$},
			]
			\addplot table[x expr={\thisrow{h}},y expr={\thisrow{eta}},col sep=comma]{results_data/convergence-results-new/heat/errors_case_3_nu_1e-6_p_1.txt};
			\addplot table[x expr={\thisrow{h}},y expr={\thisrow{err_h}},col sep=comma]{results_data/convergence-results-new/heat/errors_case_3_nu_1e-6_p_1.txt};
			\addplot table[x expr={\thisrow{h}},y expr={\thisrow{err_l2}},col sep=comma]{results_data/convergence-results-new/heat/errors_case_3_nu_1e-6_p_1.txt};
			\logLogSlopeTriangle{0.3}{0.1}{0.5}{2}{red};
			\logLogSlopeTriangle{0.3}{0.1}{0.12}{2}{blue};
			\legend{\small $\etaFull$, $\normVh{\uh-u}$, $\normL[2]{\uh-u}{\stDom}$}
		\end{loglogaxis}
	\end{tikzpicture}
	\subcaption{$\adv = (0,0),\ \visco=10^{-6}$}
	\label{fig:conv-study-heat-expdecay-1e-6}
\end{minipage}
\begin{minipage}{.49\textwidth}
	\centering
	\begin{tikzpicture}
		\begin{loglogaxis}[
			width=0.99\linewidth, 
			xlabel=$h$, 
			legend style={at={(0.95,0.05)},anchor=south east,legend columns=1}, 
			x tick label style={rotate=0,anchor=north}, 
			xtick={0.015625, 0.03125, 0.0625, 0.125},
			xticklabels={$2^{-6}$,$2^{-5}$,$2^{-4}$,$2^{-3}$},
			]
			\addplot table[x expr={\thisrow{h}},y expr={\thisrow{eta}},col sep=comma]{results_data/convergence-results-new/advdiff/errors_case_3_nu_1e-6_p_1.txt};
			\addplot table[x expr={\thisrow{h}},y expr={\thisrow{err_h}},col sep=comma]{results_data/convergence-results-new/advdiff/errors_case_3_nu_1e-6_p_1.txt};
			\addplot table[x expr={\thisrow{h}},y expr={\thisrow{err_l2}},col sep=comma]{results_data/convergence-results-new/advdiff/errors_case_3_nu_1e-6_p_1.txt};
			\logLogSlopeTriangle{0.3}{0.1}{0.3}{2}{blue};
			\logLogSlopeTriangle{0.3}{0.1}{0.48}{1.5}{red};
			\logLogSlopeTriangle{0.3}{0.1}{0.12}{2}{brown};
			\legend{\small $\etaFull$, $\normVh{\uh-u}$, $\normL[2]{\uh-u}{\stDom}$}
		\end{loglogaxis}
	\end{tikzpicture}
	\subcaption{$\adv = 2\pi (-y+\halfnice,\ x-\halfnice),\ \visco=10^{-6}$}
	\label{fig:conv-study-advdiff-expdecay-1e-6}
\end{minipage}
\caption{Convergence of the error in the $L^2$-norm, the error in the $\normVh{\cdot}$-norm, and the error indicator $\etaFull$ for a sequence of uniformly refined 3D space-time meshes. Left column plots are for the heat equation ($\adv=\mathbf{0}$), and right column for advection-diffusion equation ($\adv\neq\mathbf{0}$). Additional results on the convergence of the advection-diffusion equation can be found in \ref{sec:conv-study-higher-order}.}
\label{fig:conv-study-uniref}
\end{figure}

\subsubsection{The heat equation in 2D ($ \spDom \subset \mathbb{R}^2,\ \stDom \subset \mathbb{R}^3 $)}
\label{subsubsec:convergence-heat-eq}
The linear heat equation ($ d=2 $) is given by 
\begin{align}
    \begin{cases}
        u_t - \grad \cdot(\visco\grad u) = f \qquad &\text{in } \stDom = (0,1)\times(0,1)\times(0,1]\\
        u = 0 & \text{on } \Gamma_s\\
        u = u_0(\mvec{x}) & \text{on } \Gamma_0.\\
    \end{cases}
    \label{eq:numex-pde-heat-eq}
\end{align}
This equation is obtained by simply setting $ \adv = (0,0) $ in \eqref{pde:linear-advection-diffusion-equation}. For the convergence studies, we choose $\visco = 10^{-2}$, and the forcing $f$ is obtained by assuming the solution
\begin{align}
    u_a = e^{-t}\sin{(2\pi x)}\sin{(2\pi y}),
\end{align}
with initial condition $u_0(x,y) = u_a(x,y,t=0) = \sin{(2\pi x)}\sin{(2\pi y)}$. 

This problem is then solved through the formulation presented in \secref{sec:math-formulation} on different sizes of space-time mesh. 
\figref{fig:conv-study-heat-expdecay-1e-2} and \ref{fig:conv-study-heat-expdecay-1e-6} show the plot of $\normL{u^h-u_a}{\stDom}$, $ \normVh{u^h-u_a} $ and $\etaFull$ against $h$ (on a $\log-\log$ plot) for $\visco = 10^{-2}$ and $10^{-6}$ respectively. The rate of decrease in $ \normVh{u^h-u_a} $ with respect to $ h $ can be compared to the result in \thmref{theorem:a-priori-estimate} with $ k=1 $. 

We select these two values of diffusivities to illustrate the behavior of the error estimates and the actual errors in the low- and high- diffusivity limits. Across all diffusivities, the slope of $\normL[2]{\uh-u}{\stDom}$ is 2. In the low diffusivity range (see \figref{fig:conv-study-heat-expdecay-1e-6}) the slope of the discrete norm is 2, while in the high diffusivity range it drops to 1 (see \figref{fig:conv-study-heat-expdecay-1e-2}). This is because in the high diffusivity range, the $\visco \norm{\grad \uh}^2$ term in \eqref{eq:discrete-norm} dominates. Similarly, the slope of the estimator $\etaFull$ drops from 2 to 1 as we move from a low diffusivity range to the high diffusivity range. This is explained by looking at the two terms of the expression in \eqref{eq:eta-element}. The jump terms on the element edges (that are multiplied by $h$) become important in the high-diffusivity range. 

\subsubsection{The advection-diffusion equation in 2D ($ \spDom \subset \mathbb{R}^2,\ \stDom \subset \mathbb{R}^3 $)}
\label{subsubsec:convergence-adv-diff-eq}
The advection-diffusion equation ($ \nsd = 2 $) is given by (once again $\stDom$ is a unit cube),
\begin{align}
    \begin{cases}
        u_t + \adv\cdot\grad u- \grad \cdot(\visco\grad u) = f \qquad &\text{in } \stDom = (0,1)\times(0,1)\times(0,1]\\
        u = 0 & \text{on } \Gamma_s\\
        u = u_0(\mvec{x}) & \text{on } \Gamma_0\\
    \end{cases}
    \label{eq:numex-adv-diff-pde}
\end{align}
where we set $ \adv(\mvec{x}) = (-2\pi [y-\halfnice],\ 2\pi [x-\halfnice]) $, and we repeat the same process as in \secref{subsubsec:convergence-heat-eq} for $\visco = 10^{-2}$ and $10^{-6}$. The corresponding convergence results are shown in \figref{fig:conv-study-advdiff-expdecay-1e-2} and \ref{fig:conv-study-advdiff-expdecay-1e-6}. Once again, we see that all three quantities perform better as $\visco$ is decreased. Additional results (higher order basis functions, across various $ \visco $ values) can be found in \ref{sec:conv-study-higher-order}.




\begin{figure}
    \centering
    \begin{minipage}[t]{.32\textwidth}
    \centering
    \begin{tikzpicture}[scale=0.3]
    \draw (0,0) rectangle (7,7);
    \draw[ultra thick] (2,2) circle (0.8cm);
    \fill[blue!40!white] (2,2) circle (0.8cm);
    \draw[red,thick,dashed](-0.3,4.3) -- (4.2,-0.2);
    \draw[thick,->] (0.2,2.7) -- (0.75,3.25);
    \draw[thick,->] (2.7,0.2) -- (3.25,0.75);
    \draw[thick,->] (2,2) arc (-135:170:2cm);
    \node[above right=2pt of {(0,4)}, outer sep=0.5pt] {A};
    \node[above right=2pt of {(4,0)}, outer sep=0.5pt] {B};
    \node[above left=3pt of {(0,3)}, outer sep=0.5pt] {$y$};
    \node[below right=3pt of {(3,0)}, outer sep=0.5pt] {$x$};
    \node[below=0pt of {(0,0)}, outer sep=0pt] {$(0,0)$};
    \node[below=0pt of {(7,0)}, outer sep=0pt] {$(1,0)$};
    \node[above=0pt of {(0,7)}, outer sep=0pt] {$(0,1)$};
    \node[below=0pt of {(7,0)}, outer sep=0pt] {$(1,0)$};
    \node[above=0pt of {(7,7)}, outer sep=0pt] {$(1,1)$};
    \end{tikzpicture}
	\subcaption{Location of the pulse in the $ (x,y) $ plane}
	\label{fig:initial-pulse-location}
    \end{minipage}
    \begin{minipage}[t]{.32\textwidth}
    \centering
    \begin{tikzpicture}[scale=1]
    \begin{axis}[
          width=0.7\linewidth, 
          xlabel=$s$, 
          ylabel=$u$,
          legend cell align={left},
          legend style={at={(1,1)},anchor=north east,legend columns=1}, 
          x tick label style={rotate=0,anchor=north} 
        ]
        \addplot+ [color=blue,thick,mark=none]
        table[x expr={\thisrow{"x"}},y expr={\thisrow{"u"}},col sep=comma]{results_data/case_2_ad_smoothic_uniref/case2_initial_condition_cross_section.txt};
      \end{axis}
    \node (x) at (0.3,0.5) {A};
    \node (y) at (1.9,0.5) {B};
    \end{tikzpicture}
	\subcaption{Smooth pulse at $ t = 0 $ \\ (\secref{subsubsec:case2-smoothic})}
	\label{fig:ad-case2-initial-condition}
    \end{minipage}
	\begin{minipage}[t]{.32\textwidth}
		\centering
		\begin{tikzpicture}[scale=1]
			\begin{axis}[
				width=0.7\linewidth, 
				xlabel=$s$, 
				ylabel=$u$,
				legend cell align={left},
				legend style={at={(1,1)},anchor=north east,legend columns=1}, 
				x tick label style={rotate=0,anchor=north} 
				]
				\addplot+ [color=blue,thick,mark=none]
				table[x expr={\thisrow{"arc_length"}},y expr={\thisrow{"u_initial"}},col sep=comma]{results_data/ts_case3_ad_sharpic_uniref/k_0_mesh_128/case3_mesh128_plot_line_data.txt};
			\end{axis}
			\node (x) at (0.3,0.5) {A};
			\node (y) at (1.9,0.5) {B};
		\end{tikzpicture}
		\subcaption{Discontinous pulse at $ t = 0 $ \\ (\secref{subsubsec:case3-sharpic})}
		\label{fig:ad-case3-initial-condition}
	\end{minipage}
    \caption{Initial condition for the advection-diffusion equation examples described in \secref{subsubsec:case2-smoothic} (Gaussian pulse) and \secref{subsubsec:case3-sharpic} (discontinuous pulse): (a) the circular arrow shows the advection field in the domain and the colored circle shows the location of the initial pulse, (b) a line cut of the Gaussian pulse through the slant line AB, (c) a line cut of the discontinuous pulse through the slant line AB}
    \label{fig:ad-case2-case3-initial-condition}
\end{figure}
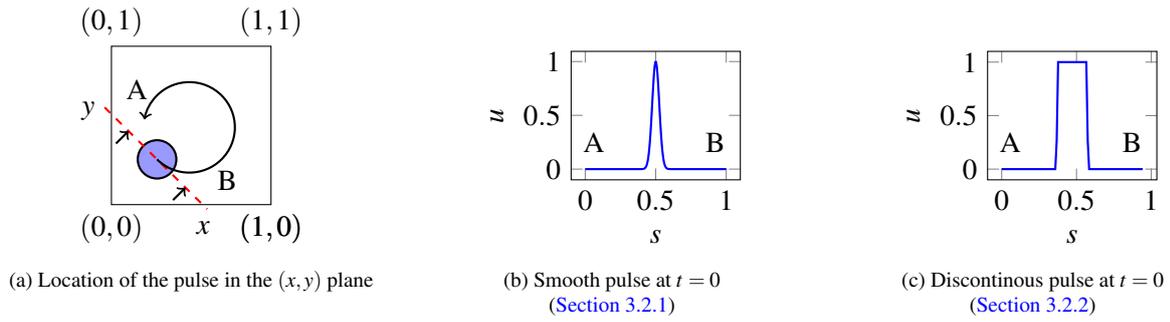
\begin{figure}[]
    \centering
    Sequential (Crank-Nicolson)\\
    \begin{minipage}[t]{.49\textwidth}
      \centering
        \includegraphics[width=0.99\linewidth]{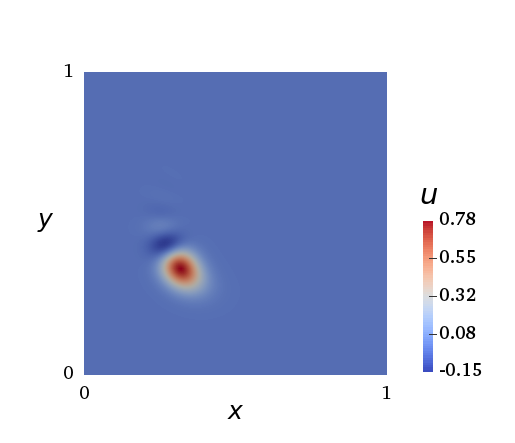}
    \end{minipage}
    \hskip 5pt
    \begin{minipage}[t]{.49\textwidth}
      \centering
      \begin{tikzpicture}
      \begin{axis}[
          width=0.99\linewidth, 
          xlabel=$s$, 
          ylabel=$u$,
          ymin=-0.2,
          legend style={at={(0.8,0.9)},anchor=north,legend columns=1}, 
          x tick label style={rotate=0,anchor=north} 
        ]
        \addplot+ [color=blue,mark=none]
        table[x expr={\thisrow{"arc_length"}},y expr={\thisrow{"u_initial"}},col sep=comma]{results_data/ts_case2_ad_smoothic_uniref/k_1e-4_mesh_128/case2_mesh128_plot_line_data.txt};
        \addplot+ [color=red,thick, mark=none]
        table[x expr={\thisrow{"arc_length"}},y expr={\thisrow{"u_curr"}},col sep=comma]{results_data/ts_case2_ad_smoothic_uniref/k_1e-4_mesh_128/case2_mesh128_plot_line_data.txt};
        \legend{ $t=0$, $t=1$}
      \end{axis}
    \end{tikzpicture}
    \end{minipage}\\
    Space-time\\
    \begin{minipage}[t]{.49\textwidth}
      \centering
      \includegraphics[width=0.99\linewidth]{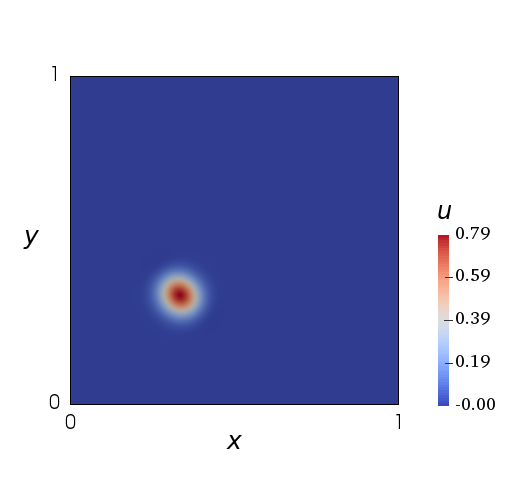}
    \end{minipage}
    \hskip 5pt
    \begin{minipage}[t]{.49\textwidth}
      \centering
      \begin{tikzpicture}
      \begin{axis}[
          width=0.99\linewidth, 
          xlabel=$s$, 
          ylabel=$u$,
          ymin=-0.2,
          legend style={at={(0.8,0.9)},anchor=north,legend columns=1}, 
          x tick label style={rotate=0,anchor=north} 
        ]
        \addplot+ [color=blue,mark=none]
        table[x expr={\thisrow{"arc_length"}},y expr={\thisrow{"u"}},col sep=comma]{results_data/case_2_ad_smoothic_uniref/k_1e-4_uniref_7/case2_k_1e-4_uniref_7_profile_t_0.txt};
        \addplot+ [color=red,thick, mark=none]
        table[x expr={\thisrow{"arc_length"}},y expr={\thisrow{"u"}},col sep=comma]{results_data/case_2_ad_smoothic_uniref/k_1e-4_uniref_7/case2_k_1e-4_uniref_7_profile_t_1.txt};
        \legend{ $t=0$, $t=1$}
      \end{axis}
    \end{tikzpicture}
    \end{minipage}
    \caption{Comparison between sequential time stepping (top row) and space-time solution (bottom row) on mesh size $128^3$ (see \secref{subsubsec:case2-smoothic}). An initial pulse ($t=0$) is rotated under an advection field. The pulse at $t=1$ is plotted.}
    \label{fig:case2-mesh-128-uniref-lev7-comparison}
\end{figure}
\begin{figure}[]
    \centering
    \begin{minipage}[t]{.49\textwidth}
      \centering
      \includegraphics[width=0.99\linewidth]{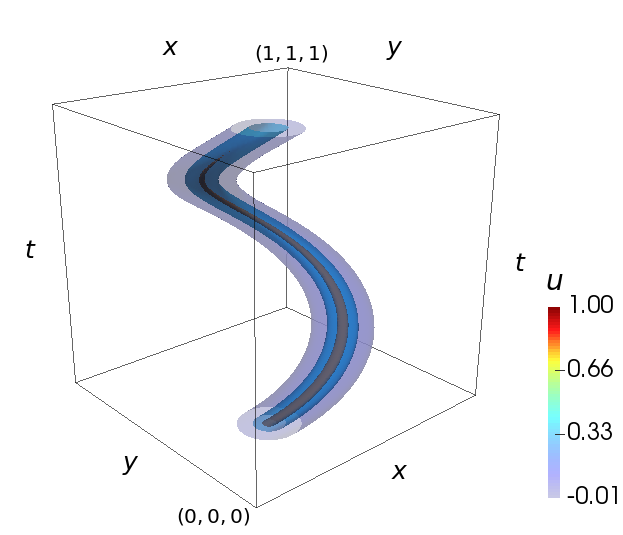}
    \end{minipage}
    \caption{A contour plot of the rotating pulse in the space-time solution with $128^3$ grid (see \secref{subsubsec:case2-smoothic}). The rotating pulse renders a helical shape in the space-time domain}
    \label{fig:case2-ad-uniref-7-threshold-view}
\end{figure}

\subsection{Uniform space-time mesh solutions and comparison to time-marching solutions}
\subsubsection{Advection-diffusion with smooth initial condition}\label{subsubsec:case2-smoothic}
We consider the two-dimensional, linear advection-diffusion equation \eqref{eq:numex-adv-diff-pde} where the advection field $\adv$ is purely rotational with unit angular velocity (see \figref{fig:initial-pulse-location}, \ref{fig:ad-case2-initial-condition}) and is given by 
\begin{align}
    \adv(\mvec{x}) = (-2\pi r_y, 2\pi r_x),
    \label{eq:numex-adv-field-rotating}
\end{align}
where $r_x$ and $r_y$ denote the distance of any point in the spatial domain from the center of the spatial domain $(\frac{1}{2},\frac{1}{2})$, i.e., $r_x=(x-0.5), r_y=(y-0.5)$. The speed of rotation is chosen in such a way that the pulse completes a full revolution at $t=1$.

The initial condition $u_0:\spDom \rightarrow \mathbb{R}$ is a smooth function and is given by
\begin{align}
    u_0(\mvec{x}) = 
    e^{-\frac{(x-a)^2+(y-b)^2}{d^2}}.
    \label{eq:numex-case2-initial-condition}
\end{align}
This is essentially a Gaussian pulse with its center at $(a,b)=\left(\frac{1}{3},\frac{1}{3}\right)$ and thickness at base $\approx 2d = 0.01$. The force $f$ on the right hand side is zero. The diffusivity value $\visco$ is fixed at $10^{-4}$. The initial pulse keeps rotating in the domain as time evolves. Since the advection field has unit angular velocity, thus theoretically the center of the pulse at $t=0$ and $t=1$ should coincide. Since the field is also diffusive, the height of the pulse reduces with time.

\figref{fig:case2-ad-uniref-7-threshold-view} shows contours of the solution $ u $ in space-time. For clarity, only some of the iso-surfaces close to the pulse are shown. The rotation of the pulse is evident from a helical structure of the figure. The total number of elements in this case is $128\times 128\times 128$ with each element being a trilinear Lagrange element.

To see how the space-time solution behaves in comparison to the time-marching methods, we choose the Crank-Nicolson scheme to solve the same problem. This is reasonable since Crank-Nicolson is the simplest second order implicit time-marching method. To compare the methods, we discretize $\stDom=(0,1)^2\times(0,1]$ with the same number of elements in each dimension. As an example, \figref{fig:case2-mesh-128-uniref-lev7-comparison} compares the solution contours obtained by the sequential method on a 2D $(128\times 128)$ mesh marching over $128$ time steps against a space-time solution in 3D using a $128^3$ mesh. The right column of plots in this figure shows a cross-section of the pulses at both $t=0$ and $t=1$ along the plane `AB', which passes through the center of initial pulse and is tangential to the local velocity vector (see \figref{fig:ad-case2-initial-condition}).

The Crank-Nicolson method is dispersive in nature and the solution exhibits a phase error; therefore the centers of the pulses between the two methods do not match. On the other hand, the space-time solution shows little dispersion and thus the peak centers align exactly. In regard to the undershoot around the pulse, the Crank-Nicolson solution shows a phase lag, whereas the space-time solution is visibly symmetric about the centre of the pulse. In both the solutions, however, the height of the peak at $t=1$ is roughly the same.

\subsubsection{Pure advection with a discontinuous initial condition}\label{subsubsec:case3-sharpic}

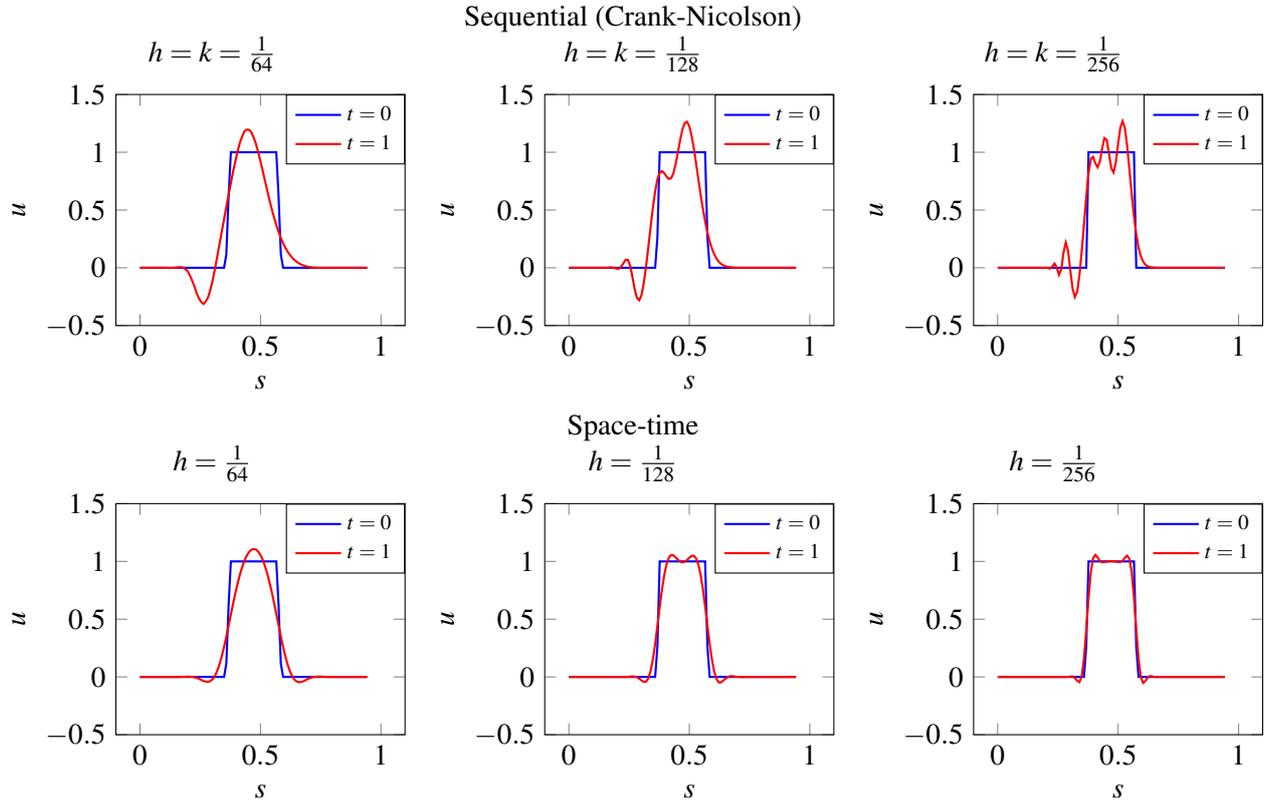
\begin{figure}[!htb]
    \centering
    Sequential (Crank-Nicolson)\\
    \begin{minipage}[t]{.32\textwidth}
      \centering
      $h = k = \frac{1}{64}$
    \end{minipage}
    \begin{minipage}[t]{.32\textwidth}
      \centering
      $h = k = \frac{1}{128}$
    \end{minipage}
    \begin{minipage}[t]{.32\textwidth}
      \centering
      $h = k = \frac{1}{256}$
    \end{minipage}\\
    \begin{minipage}[t]{.32\textwidth}
      \centering
      \begin{tikzpicture}
      \begin{axis}[
          width=0.99\linewidth, 
          xmin=-0.1,
          xmax=1.1,
          ymin=-0.5,
          ymax=1.5,
          xlabel=$s$, 
          ylabel=$u$,
          legend style={at={(1,1)},anchor=north east,legend columns=1}, 
          x tick label style={rotate=0,anchor=north} 
        ]
        \addplot+ [color=blue,thick,mark=none]
        table[x expr={\thisrow{"arc_length"}},y expr={\thisrow{"u_initial"}},col sep=comma]{results_data/ts_case3_ad_sharpic_uniref/k_0_mesh_064/case3_mesh64_plot_line_data.txt};
        \addplot+ [color=red,thick,mark=none]
        table[x expr={\thisrow{"arc_length"}},y expr={\thisrow{"u_curr"}},col sep=comma]{results_data/ts_case3_ad_sharpic_uniref/k_0_mesh_064/case3_mesh64_plot_line_data.txt};
        \legend{\scriptsize {$t=0$}, \scriptsize{$t=1$}}
      \end{axis}
    \end{tikzpicture}
    \end{minipage}
    \hspace{0.1mm}
    \begin{minipage}[t]{.32\textwidth}
      \centering
      \begin{tikzpicture}
      \begin{axis}[
          width=0.99\linewidth, 
          xmin=-0.1,
          xmax=1.1,
          ymin=-0.5,
          ymax=1.5,
          xlabel=$s$, 
          ylabel=$u$,
          legend style={at={(1,1)},anchor=north east,legend columns=1}, 
          x tick label style={rotate=0,anchor=north} 
        ]
        \addplot+ [color=blue,thick,mark=none]
        table[x expr={\thisrow{"arc_length"}},y expr={\thisrow{"u_initial"}},col sep=comma]{results_data/ts_case3_ad_sharpic_uniref/k_0_mesh_128/case3_mesh128_plot_line_data.txt};
        \addplot+ [color=red,thick, mark=none]
        table[x expr={\thisrow{"arc_length"}},y expr={\thisrow{"u_curr"}},col sep=comma]{results_data/ts_case3_ad_sharpic_uniref/k_0_mesh_128/case3_mesh128_plot_line_data.txt};
        \legend{\scriptsize {$t=0$}, \scriptsize{$t=1$}}
      \end{axis}
    \end{tikzpicture}
    \end{minipage}
    \hspace{0.1mm}
    \begin{minipage}[t]{.32\textwidth}
      \centering
      \begin{tikzpicture}
      \begin{axis}[
          width=0.99\linewidth, 
          xmin=-0.1,
          xmax=1.1,
          ymin=-0.5,
          ymax=1.5,
          xlabel=$s$, 
          ylabel=$u$,
          legend style={at={(1,1)},anchor=north east,legend columns=1}, 
          x tick label style={rotate=0,anchor=north} 
        ]
        \addplot+ [color=blue,thick,mark=none]
        table[x expr={\thisrow{"arc_length"}},y expr={\thisrow{"u_initial"}},col sep=comma]{results_data/ts_case3_ad_sharpic_uniref/k_0_mesh_256/case3_mesh256_plot_line_data.txt};
        \addplot+ [color=red,thick,mark=none]
        table[x expr={\thisrow{"arc_length"}},y expr={\thisrow{"u_curr"}},col sep=comma]{results_data/ts_case3_ad_sharpic_uniref/k_0_mesh_256/case3_mesh256_plot_line_data.txt};
        \legend{\scriptsize {$t=0$}, \scriptsize{$t=1$}}
      \end{axis}
    \end{tikzpicture}
    \end{minipage}\\
    Space-time\\
    \begin{minipage}[t]{.32\textwidth}
      \centering
      $h = \frac{1}{64}$
    \end{minipage}
    \begin{minipage}[t]{.32\textwidth}
      \centering
      $h = \frac{1}{128}$
    \end{minipage}
    \begin{minipage}[t]{.32\textwidth}
      \centering
      $h = \frac{1}{256}$
    \end{minipage}\\
    \begin{minipage}[t]{.32\textwidth}
      \centering
      \begin{tikzpicture}
      \begin{axis}[
          width=0.99\linewidth, 
          xmin=-0.1,
          xmax=1.1,
          ymin=-0.5,
          ymax=1.5,
          xlabel=$s$, 
          ylabel=$u$,
          legend style={at={(1,1)},anchor=north east,legend columns=1}, 
          x tick label style={rotate=0,anchor=north} 
        ]
        \addplot+ [color=blue,thick,mark=none]
        table[x expr={\thisrow{"arc_length"}},y expr={\thisrow{"u"}},col sep=comma]{results_data/case_3_ad_sharpic_uniref/k_0_uniref_6/case3_uniref_6_profile_t_0.txt};
        \addplot+ [color=red,thick,mark=none]
        table[x expr={\thisrow{"arc_length"}},y expr={\thisrow{"u"}},col sep=comma]{results_data/case_3_ad_sharpic_uniref/k_0_uniref_6/case3_uniref_6_profile_t_1.txt};
        \legend{\scriptsize {$t=0$}, \scriptsize{$t=1$}}
      \end{axis}
    \end{tikzpicture}
    \end{minipage}
    \hspace{0.1mm}
    \begin{minipage}[t]{.32\textwidth}
      \centering
      \begin{tikzpicture}
      \begin{axis}[
          width=0.99\linewidth, 
          xmin=-0.1,
          xmax=1.1,
          ymin=-0.5,
          ymax=1.5,
          xlabel=$s$, 
          ylabel=$u$,
          legend style={at={(1,1)},anchor=north east,legend columns=1}, 
          x tick label style={rotate=0,anchor=north} 
        ]
        \addplot+ [color=blue,thick,mark=none]
        table[x expr={\thisrow{"arc_length"}},y expr={\thisrow{"u"}},col sep=comma]{results_data/case_3_ad_sharpic_uniref/k_0_uniref_7/case3_uniref_7_profile_t_0.txt};
        \addplot+ [color=red,thick,mark=none]
        table[x expr={\thisrow{"arc_length"}},y expr={\thisrow{"u"}},col sep=comma]{results_data/case_3_ad_sharpic_uniref/k_0_uniref_7/case3_uniref_7_profile_t_1.txt};
        \legend{\scriptsize {$t=0$}, \scriptsize{$t=1$}}
      \end{axis}
    \end{tikzpicture}
    \end{minipage}
    \hspace{0.1mm}
    \begin{minipage}[t]{.32\textwidth}
      \centering
      \begin{tikzpicture}
      \begin{axis}[
          width=0.99\linewidth, 
          xmin=-0.1,
          xmax=1.1,
          ymin=-0.5,
          ymax=1.5,
          xlabel=$s$, 
          ylabel=$u$,
          legend style={at={(1,1)},anchor=north east,legend columns=1}, 
          x tick label style={rotate=0,anchor=north} 
        ]
        \addplot+ [color=blue,thick,mark=none]
        table[x expr={\thisrow{"arc_length"}},y expr={\thisrow{"u"}},col sep=comma]{results_data/case_3_ad_sharpic_uniref/k_0_uniref_8/case3_uniref_7_profile_t_0.txt};
        \addplot+ [color=red,thick,mark=none]
        table[x expr={\thisrow{arc_length}},y expr={\thisrow{u}},col sep=comma]{results_data/case_3_ad_sharpic_uniref/k_0_uniref_8/case3_uniref_8_profile_t_1.txt};
        \legend{\scriptsize {$t=0$}, \scriptsize{$t=1$}}
      \end{axis}
    \end{tikzpicture}
    \end{minipage}
    \caption{Sequential and space-time solutions for the case described in Section \ref{subsubsec:case3-sharpic} on a uniform mesh of size $128^3$.}
    \label{fig:case3-mesh-128-uniref-lev7-comparison}
\end{figure}

We solve \eqref{eq:numex-adv-diff-pde} with the same advection field mentioned in \eqref{eq:numex-adv-field-rotating} but this time with an initial condition that is not smooth in space. The initial data is given by,
\begin{align}
    u_0(\mvec{x}) = \begin{cases}
    1 & \text{if } (r_1^2 + r_2^2) \leq 1,\\
    0 & \text{otherwise},
    \end{cases}
    \label{eq:numex-case3-initial-condition}
\end{align}
where $\mvec{r} = (\mvec{x}-\mvec{x_0})/\sigma$. Here $\mvec{x_0} = \left(\frac{1}{3},\frac{1}{3}\right)$ is the initial position of the center of the circular pulse. The radius of this pulse is 1. Clearly, $u_0$ is discontinuous in space, (see \figref{fig:initial-pulse-location}, \ref{fig:ad-case3-initial-condition}).
The diffusivity value $\visco$ is set to $10^{-8}$ in this case. As earlier, the source term $f$ is zero. This choice of a negligible value of $\visco$ effectively renders this case as a purely advective one. The global Peclet number is given as $ Pe_g = \frac{\modulus{a}\cdot L}{\visco} \approx 4.4 \times 10^8 $. As in the previous case, we discretize this problem through space-time as well as sequential time marching schemes and compare the pulse at $t=0$ and $t=1$. \figref{fig:case3-mesh-128-uniref-lev7-comparison} shows this comparison for both sequential and space-time method for three sizes of discretizations: $ h=\oneOver{64}, \ \oneOver{128} $ and $  \oneOver{256} $. The corresponding mesh-Peclet numbers are $ Pe = \frac{\modulus{a}\cdot h}{2\visco} \approx 3.47\times 10^6, \ 1.74\times 10^6 $ and $ 8.68 \times 10^5 $ respectively. As in the previous example, the plots are line cuts of the pulse onto the plane `AB'.


It can be noticed immediately that both sequential and space-time formulation have difficulty approximating the discontinuous pulse, which can be attributed to the use of continuous Galerkin approximation when attempting to model a solution that is discontinuous. The final time representation of the pulse gets smoothened out in both cases, albeit to a different degree. But once again, the Crank-Nicolson method exhibits higher dispersion and phase errors, whereas the space-time solution has zero phase error and a smaller dispersion. Once again, as in the previous example, the undershoot in the Crank-Nicolson method only takes place in the upwind direction whereas there is typically no undershoot in the downwind direction. But the space-time solution does not display any directional preference for the undershoot.

\subsection{Adaptive solutions}\label{subsec:adaptive-solutions}
The examples considered in the previous section have solutions that show a high degree of spatial as well as temporal localization. That is, at a given instance of time, the solution function has a significant change in value only at some small area of the whole domain and zero at all other points of the domain. Moreover, as time evolves, there is a limited region where the solution changes. A significant portion of the domain never experiences any change in the solution with the evolution of time. This can be observed in \figref{fig:case2-ad-uniref-7-threshold-view}, where the solution is zero in all of the white region. This kind of problems are therefore perfect candidates where space-time adaptive refinement strategy can be useful. To show the effectiveness of adaptive refinement in a space-time simulation, we consider the following three examples. 

\subsubsection{The heat equation: estimator behavior in adaptive refinement}
\begin{figure}[!htb]
\centering
\begin{minipage}{.49\textwidth}
  \begin{center}
  \begin{tikzpicture}
      \begin{loglogaxis}[
          width=0.99\linewidth, 
          xlabel=$\textsc{\# DOF}$, 
          legend style={at={(0.95,0.95)},anchor=north east,legend columns=1}, 
          x tick label style={rotate=0,anchor=north}
        ]
        \addplot+
        table[x expr={(\thisrow{nodes})},y expr={\thisrow{L2_Err}},col sep=space]{results_data/case_7_diff_gaussian_adref/k_1e-2_etol_1e-4/case7_error_data_uniref.txt};
        \addplot+
        table[x expr={(\thisrow{nodes})},y expr={\thisrow{L2_Err}},col sep=space]{results_data/case_7_diff_gaussian_adref/k_1e-2_etol_1e-4/case7_error_data_adref.txt};
        \logLogSlopeTriangleFlipped{0.8}{0.1}{0.4}{0.66}{$ \nicefrac{2}{3} $}{blue};
        \logLogSlopeTriangleDecreasing{0.48}{0.1}{0.2}{0.96}{$ \approx 1 $}{red};
        \legend{\small{$\|e\|_{L^2}$, Uniform},\small{$\|e\|_{L^2}$, Adaptive}}
      \end{loglogaxis}
    \end{tikzpicture}
    \subcaption{Convergence of $\normL{e}{\Omega}$}
    \label{fig:case5-diff-conv-error-compare-adref-uniref}
    \end{center}
\end{minipage}
\hskip 5pt
\begin{minipage}{.49\textwidth}
  \begin{center}
  \begin{tikzpicture}
      \begin{loglogaxis}[
          width=0.99\linewidth, 
          xlabel=$\textsc{\# DOF}$, 
          legend style={at={(0.95,0.95)},anchor=north east,legend columns=1}, 
          x tick label style={rotate=0,anchor=north}
        ]
        \addplot+
        table[x expr={(\thisrow{nodes})},y expr={\thisrow{eta}},col sep=space]{results_data/case_7_diff_gaussian_adref/k_1e-2_etol_1e-4/case7_error_data_uniref.txt};
        \addplot+
        table[x expr={(\thisrow{nodes})},y expr={\thisrow{eta}},col sep=space]{results_data/case_7_diff_gaussian_adref/k_1e-2_etol_1e-4/case7_error_data_adref.txt};
        \logLogSlopeTriangleFlipped{0.75}{0.1}{0.55}{0.34}{$ \nicefrac{1}{3} $}{blue};
        \logLogSlopeTriangleDecreasing{0.48}{0.1}{0.2}{0.54}{$ \approx \halfnice $}{red};
        \legend{\small{$\etaFull$, Uniform},\small{$\etaFull$, Adaptive}}
        \end{loglogaxis}
    \end{tikzpicture}
    \subcaption{Convergence of $ \etaFull $}
    \label{fig:case5-diff-conv-eta-compare-adref-uniref}
    \end{center}
\end{minipage}
\caption{Comparison of the error indicator and the error in $L^2$-norm in both uniform and adaptive refinement. For the uniform refinement curves (in blue), the slopes of the $ L^2 $-error and the estimator $ \etaFull $ are $\nicefrac{2}{3}$ and $\nicefrac{1}{3}$ respectively. When compared with \figref{fig:conv-study-heat-expdecay-1e-2}, the additional factor of $ \nicefrac{1}{3} $ in the slopes appears here because the $ x $-axis is the number of degrees of freedom (\#DOF)  which is equal to $ h^{-3} $, or $ h = (\#DOF)^{-\nicefrac{1}{3}} $.}
\label{fig:case5-diff-conv-compare-adref-uniref}
\end{figure}

\begin{figure}[!htb]
\centering
\begin{minipage}[t]{.49\textwidth}
\includegraphics[width=0.99\linewidth]{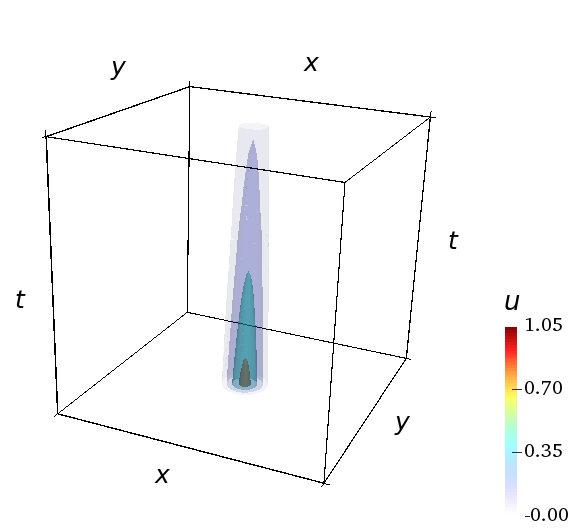}
\end{minipage}
\begin{minipage}[t]{.49\textwidth}
\includegraphics[width=0.99\linewidth]{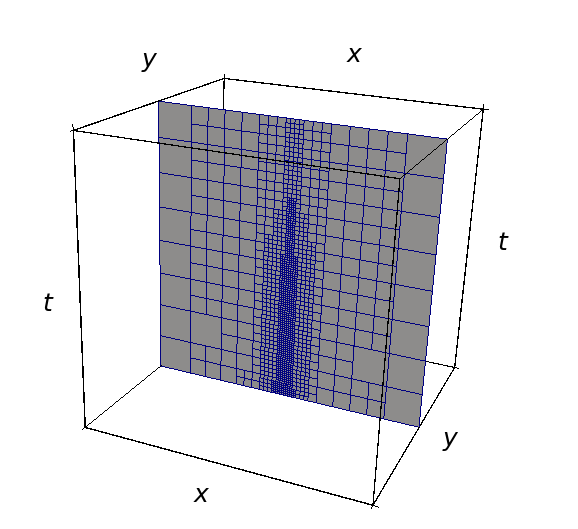}
\end{minipage}
\caption{(Left) contours of the solution to the Gaussian heat source problem \eqref{eq:numex-pde-heat-eq}, (right) a $y-t$ slice at the final state of the adaptively refined mesh}
\label{fig:case5-refined-mesh-slice-yt-plane}
\end{figure}


We begin with a simple heat diffusion problem to illustrate adaptive refinement in space-time. We go back to the heat \eqref{eq:numex-pde-heat-eq} with a forcing $f$ given by
\begin{align}
	f = - \left[ \frac{2}{\theta} + \visco \left(\frac{4}{d^4}\right)  \left( r_x^2 + r_y^2 - d^2 \right) \right] e^{-\nicefrac{2t}{\theta}-\nicefrac{(r_x^2+r_y^2)}{d^2}}.
    \label{eq:case7-gaussian-forcing}
\end{align}
The analytical solution is given by
\begin{align}
    u = e^{-\nicefrac{2t}{\theta} - \nicefrac{(r_x^2+r_y^2)}{d^2}}.
    \label{eq:case7-gaussian-u-analytical}
\end{align}
This problem describes a Gaussian pulse at the center of the spatial domain $U$ diffusing as time evolves.
In both \eqref{eq:case7-gaussian-forcing} and \eqref{eq:case7-gaussian-u-analytical}, $r_x = x-x_0$ and $r_y = y-y_0$, where $(x_0,y_0) = (\nicefrac{1}{2},\nicefrac{1}{2})$ is the center of the heat pulse, $d = 0.05$ is the ``thickness" of the pulse. The other parameter $\theta$ determines the time-scale, and we set it to 1. The diffusivity $\visco = 0.01$ .

To solve this problem adaptively, we begin with a very coarse 3D octree mesh (representing 2D in space and 1D in time). After computing the FEM space-time solution on this coarse mesh, we use the \textit{a posteriori} error estimate presented in \thmref{theorem:a-posteriori-estimate} to calculate the error indicator $\etaElm$ in each element ($K$ $\in$ $\mesh$). The elements, where the indicator $\etaElm$ is larger than a predetermined error tolerance, are refined. The refined mesh is then used to solve the same problem once again. This process is repeated for a few times till the error value in all the elements are smaller then the tolerance. \figref{fig:case5-diff-conv-compare-adref-uniref} shows comparisons of $\normL{u-u^h}{\stDom}$ and $ \etaFull $ for both uniform refinement and adaptive refinement. As expected, the error decreases much more rapidly when adaptive refinement is used.

A slice of the space-time mesh at the final refined state is shown in \figref{fig:case5-refined-mesh-slice-yt-plane}. The slice is a $y$-constant plane, therefore the levels of refinement in the time direction are visible in this image. The largest element size in this slice is $h_{max}=\frac{1}{2^3}=\frac{1}{8}$ whereas the smallest element size is $h_{min}=\frac{1}{2^7}=\frac{1}{128}$. The ratio of $h_{max}$ to $h_{min}$ is thus $16$.

\subsubsection{Advection-diffusion with smooth initial condition}\label{subsubsec:adref-case2-smoothic}
\input{sections/6_06_case2_ad_smoothic.tex}

\figref{fig:case2-adref-final-state-mesh-with-threshold} shows the pulse contour for the advection diffusion problem (solved earlier using a uniform space-time mesh) with a smooth initial condition, along with the adaptively refined space-time mesh. A 2D slice of the same mesh is shown in \figref{fig:case2-adref-final-state-mesh-slice-with-threshold}. \figref{fig:case2-adref-final-state-xy-slice} shows the mesh slice at $t=1$. As expected, the refinement is clustered around the smooth pulse. \figref{fig:case2-adref-final-state-yt-slice} on the other hand, shows a $y-t$ slice of the whole mesh. Once again, as seen from both these slices, the mesh size varies greatly between the finer and the coarser regions, which in this case, differs by a factor of $2^5$, i.e., the largest element has a size that is 32 times the size of the smallest element. Especially in relation to the mesh slice in \figref{fig:case2-adref-final-state-yt-slice}, it can be interpreted that different regions in the spatial domain are subjected to different ``time steps" to reach the same final time. This is a very different behavior compared to a sequential solution where every region in the spatial domain has to go through the same number of time steps to reach the final time. Moreover, in sequential methods, the size of this time-step is often constrained by the minimum size of the mesh. For example, in this problem, the error in the solution is higher near the peak of the pulse and is gradually less as distance from the peak increases in space-time. This indicates that to achieve a reasonable accuracy, the regions near the pulse need to be resolved at least by elements of size $\approx \frac{1}{2^8}$ in space. This then implies that when using implicit time-marching methods such as the Crank-Nicolson method, the time-step size also needs to be $\mathcal{O}(h)$. 

\figref{fig:case2-ad-adref-different-kappas} plots the cross-section of the pulse at $t=0$ and $t=1$ solved through both uniform-refinement and adaptive-refinement for four different values of $\visco$. For all of these four cases, the uniformly refined mesh is of size $128^3$, i.e., $\textsc{DOF} = 2146689$. The $\textsc{DOF}$ in the adaptive meshes vary from case to case. As an example, for the case of $\visco=10^{-8}$, the final refined state has $\textsc{DOF}=192507$.

Note that there is an appreciable decrease in the height of the pulse when $\visco = 10^{-4}$, but there is no such apparent loss when $\visco=10^{-6}$ or $\visco=10^{-8}$ which is correctly captured by the adaptive solutions. But the uniform $128^{3}$ mesh is unable to capture this; indicating that a further finer mesh is required. This shows that when $\visco$ is very low, it is very difficult to resolve the peak without resorting to extremely small mesh sizes. A final observation is that $\visco < 10^{-6}$ can be used to model a ``pure advection" case when considering $t\in[0,1]$ for this problem. Thus, this value of $\visco$ is used to model a ``purely advective" transport problem in the next example. 

\subsubsection{The advection-diffusion with sharp initial condition}\label{subsubsec:adref-case3-sharpic}
\begin{figure}
\centering
\begin{minipage}[t]{.48\textwidth}
\includegraphics[width=0.99\linewidth]{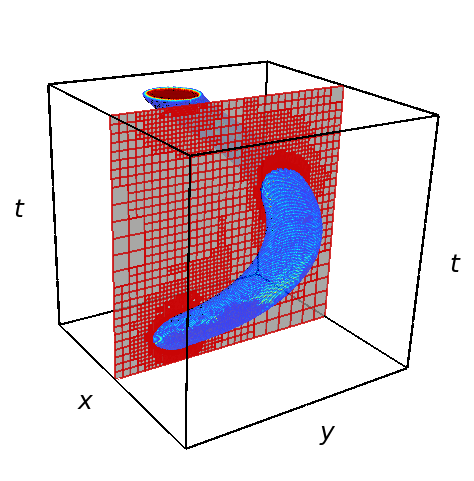}
\caption{Contour of the discontinuous pulse through the space-time mesh}
 \label{fig:case3-adref-final-state-contour-view}
\end{minipage}
\hspace{0.5mm}
\begin{minipage}[t]{.48\textwidth}
  \centering
    \begin{tikzpicture}
      \begin{axis}[
          width=0.99\linewidth, 
          xlabel=$s$, 
          ylabel=$u$,
          legend style={at={(0.8,0.9)},anchor=north,legend columns=1}, 
          x tick label style={rotate=0,anchor=north} 
        ]
        \addplot+ [color=blue,mark=none]
        table[x expr={\thisrow{arc_length}},y expr={\thisrow{u}},col sep=comma]{results_data/case_3_ad_sharpic_adref/adref_k_1e-8/case3_adref_k_1e-8_at_t_0.txt};
        \addplot+ [color=red,thick, mark=none]
        table[x expr={\thisrow{arc_length}},y expr={\thisrow{u}},col sep=comma]{results_data/case_3_ad_sharpic_adref/adref_k_1e-8/case3_adref_k_1e-8_at_t_1.txt};
        \legend{ $t=0$, $t=1$}
      \end{axis}
    \end{tikzpicture}
    \caption{Cross section of the discontinuous pulse at $t=0$ and $t=1$}
 \label{fig:case3-adref-initial-final-plot}
\end{minipage}
\end{figure}

\begin{figure}
\centering
\begin{minipage}[t]{.49\textwidth}
\includegraphics[width=0.99\linewidth]{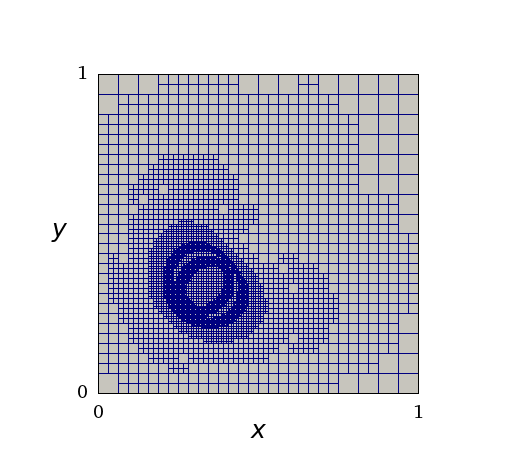}
\subcaption{An $ x-y $ cross-section of the mesh $ t = 1 $}
\label{fig:case3-adref-final-state-xy-slice}
\end{minipage}
\begin{minipage}[t]{.49\textwidth}
\includegraphics[width=0.99\linewidth]{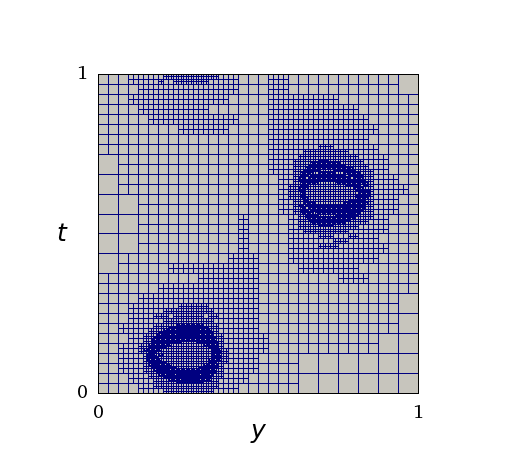}
\subcaption{A $ y-t $ cross-section of the mesh at $ x=0.5 $}
\label{fig:case3-adref-final-state-yt-slice}
\end{minipage}
\caption{Slices of the final refined mesh for the adaptive solution of \secref{subsubsec:adref-case3-sharpic}}
\label{fig:case3-adref-final-state-refined-meshes}
\end{figure}

Finally we  take a quick glance at the adaptive solution of the pure advection problem with a discontinuous initial data. This is the same problem discussed in \secref{subsubsec:case3-sharpic}. \figref{fig:case3-adref-final-state-contour-view} shows contour of the discontinuous pulse in the space-time mesh at the final refinement state. \figref{fig:case3-adref-initial-final-plot} shows the cross-section of the pulse at $t=0$ and at $t=1$. The interior of the pulse has bigger elements than the boundary of the pulse where the discontinuity lies. As mentioned before, the solution quality near the discontinuity is restricted by the underlying continuous Galerkin method which is not ideal at approximating discontinuities. This produces overshoot and undershoot near the discontinuity which results in higher residual and eventually high refinement near the discontinuity. \figref{fig:case3-adref-final-state-xy-slice} shows a 2D $x-y$ slice and \figref{fig:case3-adref-final-state-yt-slice} shows a 2D $y-t$ slice of the final refined mesh. Once again the mesh size varies greatly throughout the domain.

\section{Conclusions}\label{sec:conclusion}
In this work, we considered a coupled-space-time finite element formulation of the time-dependent advection-diffusion equation using continuous Galerkin method. Such an operator can be cast as a generalized advection-diffusion equation in the space-time domain. Due to the non-dissipative nature of the advection operator, the discrete problem corresponding to these equations may become unstable. To overcome this lack of stability of the discrete variational problem, we formulate an analogue of the Galerkin/least square (GLS) stabilization method in space-time. We show that the GLS-type regularization results in a stable discrete variational problem. We subsequently prove a priori error estimates; and also present a residual based a posteriori error estimate that is used to achieve adaptive refinement in space-time. 

We test our method on various numerical examples such as the heat equation (smooth solution) and the advection-diffusion equation  (both smooth and non-smooth solution). An interesting feature of the space-time solution is that the solution does not suffer any phase error. Also when approximating a discontinuous solution, the space-time method display considerably less oscillations compared to a sequential method of similar accuracy (Crank-Nicolson method). When coupled with an adaptive mesh refinement strategy, both smooth and discontinuous fields can be approximated closely and can even model no-loss solution in the presence of negligible diffusion.

Potential future work include extension to other stabilization techniques such as the streamline upwind Petrov Galerkin method and the variational multiscale method. In addition, nonlinear equations pose significantly different challenges compared to linear PDEs, therefore nonlinear operators such as the Cahn-Hilliard system or the Navier-Stokes equation need to be considered and analyzed with such a formulation. Furthermore, all the examples in this paper were obtained using 3D space-time (i.e., 2D + time) meshes, thus another avenue of future works include extension of the computational meshes to four-dimensions.


\section{Acknowledgements}
This work was partly supported by the National Science Foundation under the grants NSF LEAP-HI 2053760, NSF 1935255. 

\bibliographystyle{unsrt}
\bibliography{reflist}

\appendix
\section{Proofs for \secref{sec:math-formulation}}

\subsection{\lemref{lemma:boundedness} (Boundedness)}\label{sec:proof-boundedness}
\begin{proof}[Proof of \lemref{lemma:boundedness}]
	Using the triangle inequality on  \eqref{def:discrete-bilinear-form}, we have
	\begin{align*}
	|\bformh(\uh,\vh)| &= |(\uht,\vh)| + |(\adiv\uh, \vh)| + |(\visco\grad\uh,\grad\vh)| + |\innerh{\glspar \opLT \uh}{\opLT \vh}|.
	\end{align*}	
	The third and the last terms can be bounded by applying Cauchy-Schwarz inequality. Applying integration by parts on the first inner product, we have
	\begin{align*}
		(\uht,\vh) = -\inner{\uh}{\vht} + \inner{\uh}{\vh}_{\bft} - \inner{\uh}{\vh}_{\bit} = -\inner{\uh}{\vht} + \inner{\uh}{\vh}_{\bft},
	\end{align*}
	since $\vh = 0$ on $\bit$. Applying triangle inequality and Cauchy-Schwarz inequality,
	\begin{align*}
		|(\uht,\vh)| \leq |-\inner{\uh}{\vht}| + |\inner{\uh}{\vh}_{\bft} | \leq \norm{\uh}\normVh{\vh} + \normFT{\uh}\normFT{\vh}.
	\end{align*}
	The second term can be bounded by the generalized H\"{o}lder's inequality as
	\begin{align*}
		|\inner{\adiv\uh}{\vh}| \leq \normL[p]{\adv}{\stDom} \normL[q]{\grad \uh}{\stDom} \normL[r]{\vh}{\stDom},
	\end{align*}
	where $1 \leq p,q,r,\leq \infty$, and $\nicefrac{1}{p} + \nicefrac{1}{q} + \nicefrac{1}{r} = 1$. We can choose $p=4,\ q=2$, and $r=4$. Furthermore, by Sobolev embedding theorems, we have $ H^1(\stDom) \hookrightarrow L^4(\stDom)$ (for $\nsdt =2,\ 3,\ 4$). Therefore,
	\begin{align*}
		|\inner{\adiv\uh}{\vh}| &\leq \normL[4]{\adv}{\stDom} \normL[2]{\grad\uh}{\stDom} \normL[4]{\vh}{\stDom} \\
		&\leq \normH[1]{\adv}{\stDom} \normL[2]{\grad\uh}{\stDom} \normH[1]{\vh}{\stDom} \\
		&\leq \sqrt{\poincareConstant^2 +1} \normH[1]{\adv}{\stDom} \normL[2]{\grad\uh}{\stDom} \normL[2]{\grad\vh}{\stDom}, \quad \left(\text{using} \normL[2]{\cdot}{\stDom} \leq \poincareConstant \normL[2]{\grad (\cdot)}{\stDom}\right) \\
		&= \gamma \normL[2]{\grad\uh}{\stDom} \normL[2]{\grad\vh}{\stDom} \quad \left(\gamma := \sqrt{\poincareConstant^2 +1}\normH[1]{\adv}{\stDom} \right)
	\end{align*}
	where $\poincareConstant$ is Poincare's constant.
	Putting everything together and applying the generalized Cauchy-Schwarz inequality (for sums), we have
	\begin{align*}
	|\bformh(\uh,\vh)| &\leq \norm{\uh}\norm{\vht} + \normFT{\uh}\normFT{\vh} + \gamma \norm{\grad\uh} \norm{\grad \vh} + \visco\norm{\grad \uh} \norm{\grad\vh} + \normh{\glspar^{\halfnice} \opLT\uh} \normh{\glspar^{\halfnice} \opLT\vh} \\
	&\leq \left[ \norm{\uh}^2 + \normFT{\uh}^2 + \gamma \norm{\grad\uh}^2 + \visco \norm{\grad \uh}^2+ \normh{\glspar^{\halfnice} \opLT\uh}^2 \right]^{\halfnice}  \\
	&\qquad\qquad\qquad\qquad\qquad \times \left[ \norm{\vht}^2 + \normFT{\vh}^2 + \gamma \norm{\grad \vh}^2 + \visco\normh{\grad \vh}^2 + \normh{\glspar^{\halfnice} \opLT \vh}^2 \right]^{\halfnice}.
	\end{align*}
	Using Poincare's inequality $\norm{\uh} \leq \poincareConstant \norm{\grad\uh}$, 
	\begin{align*}
		|\bformh(\uh,\vh)| &\leq \left[ \normFT{\uh}^2 + (\gamma + \poincareConstant^2 + \visco) \norm{\grad\uh}^2 + \normh{\glspar^{\halfnice} \opLT\uh}^2 \right]^{\halfnice}  \\
		&\qquad\qquad \times \left[ \norm{\vht}^2 + \normFT{\vh}^2 + (\gamma + \visco) \norm{\grad \vh}^2 + \normh{\glspar^{\halfnice} \opLT \vh}^2 \right]^{\halfnice} \\
		&\leq \mu_b \normVh{\uh}\normVhstar{\vh}.
	\end{align*}
	where
	\begin{align*}
	\mu_b &= c_1 \times c_2, \\
	c_1 &=  \max \left\{ 1, \frac{(\gamma + \poincareConstant^2 + \visco)}{\visco} \right\}, \\
	c_2 &= \max \left\{ 1, \frac{(\gamma + \visco)}{\visco} \right\}.
	\end{align*}
\end{proof}

\subsection{\lemref{lemma:coercivity-stability} (Coercivity)}\label{sec:proof-coercivity}
\begin{proof}[Proof of \lemref{lemma:coercivity-stability}]
	From \eqref{def:discrete-bilinear-form}, we have
	\begin{align*}
	\bformh(\uh,\uh) &= \inner{\uht}{\uh} + \inner{\adiv \uh}{\uh} + \inner{\visco \grad \uh}{\grad \uh} +\innerh{\glspar \opLT \uh}{\opLT \uh}. \end{align*}
	The first term can be written as
	\begin{align*}
	\inner{\uht}{\uh} &= \intSpaceTime \uh \uht \  dt\ d\spDom = \half \intSpaceTime \frac{\partial}{\partial t} (\uh^2) dt d\spDom = \half \intSpace \left[ \uh^2(\xvec, T) - \uh^2(\xvec,0) \right]\ d\spDom \\
	&= \half \intSpace \uh^2(\xvec, T) \ d\spDom \\
	&= \halfnice \norm{\uh}_{\bft}^2.
	\end{align*}
	Using $ \divergence \adv = 0 $, the second term can be written as
	\begin{align*}
		(\adiv\uh, \uh) = (\adiv\uh, \uh) + \inner{(\divergence \adv)\uh}{\uh} = \inner{\divergence (\uh \adv^T)}{\uh} &= -\inner{\uh \adv^T}{\grad \uh} = -\inner{\uh}{\adiv \uh}.
	\end{align*}
	Which implies $ 2\inner{\adiv \uh}{\uh}  = 0 $. So, we have
	\begin{align*}
	\bformh(\uh,\uh) &= \halfnice \norm{\uh}_{\bft}^2 + \visco \norm{\grad \uh}^2 +\normh{\glspar^{\halfnice} \opLT \uh}^2 \\
	&\geq \mu_c\ \normVh{\uh}^2,
	\end{align*}
	where $ \mu_c = \halfnice$.
\end{proof}

\subsection{\cororef{lemma:interp-boundary-estimate}} \label{sec:proof-interp-boundary}
\begin{proof}[Proof of \cororef{lemma:interp-boundary-estimate}]
	We make use of the trace inequality (\cite[Sec 1.4.3]{di2011mathematical}, and \cite[Sec 3.3]{verfurth2013posteriori}) defined for each element $K$ in the mesh,
	\begin{align*}
	\normL{v-\interp{v}}{\partial K}^2 &\leq C_E \left( h_K^{-1}\normL{v-\interp{v}}{K}^2 + h_K\normL{\grad(v-\interp{v})}{K}^2 \right) \\
	&\leq  C_E \left( h_K^{-1}{\approxConstant}_0^2 \he^{2(k+1-0)}|v|_{H^{k+1}(K)}^2 + h_K {\approxConstant}_1^2 \he^{2(k+1-1)}|v|_{H^{k+1}(K)}^2 \right) \\	
	&\leq C_E ({\approxConstant}_0^2 + {\approxConstant}_1^2) \he^{2k+1} |v|_{H^{k+1}(K)}^2 \\
	&= C_{aE}^2 \he^{2k+1} |v|_{H^{k+1}(K)}^2,
	\end{align*}
	where $ C_{aE} = \sqrt{C_E ({\approxConstant}_0^2 + {\approxConstant}_1^2)} $.
\end{proof}

\subsection{\lemref{lemma:interp-estimate-in-Vh}} \label{proof:interp-estimate-vh}
\begin{proof}[Proof of \lemref{lemma:interp-estimate-in-Vh}]
	Suppose, $ \ei = \interp{u} - u $, and denote $ \adivst = \partial_t + \adiv $ Then we have
	\begin{align*}
	\normVh{\ei}^2 &= \left[\normVExpanded{\ei}\right] \\
	&\leq {\approxConstant}_{\Gamma}^2 h^{2k+1}\seminormH[k+1]{u}{\stDom}^2  + \visco {\approxConstant}_1^2 h^{2k} \seminormH[k+1]{u}{\stDom}^2 + \elmsum \glspar_K \normElm{\adivst{\ei} - \visco\laplacian\ei}^2 \\
	&\leq \left({\approxConstant}_{\Gamma}^2 h^{2k+1}  + \visco {\approxConstant}_1^2 h^{2k}\right)\seminormH[k+1]{u}{\stDom}^2 + \elmsum \glspar_K \left[ \normElm{\adivst \ei} + \visco\normElm{\laplacian \ei} \right]^2 \\
	&\leq \left({\approxConstant}_{\Gamma}^2 h^{2k+1}  + \visco {\approxConstant}_1^2 h^{2k}\right)\seminormH[k+1]{u}{\stDom}^2 + \elmsum \glspar_K \left[ {\approxConstant}_1\normL[2]{\advst}{K} \he^{k} \seminormH[k+1]{u}{K} + \visco{\approxConstant}_{2} \he^{k-1} \seminormH[k+1]{u}{K} \right]^2 \\
	&\leq \left({\approxConstant}_{\Gamma}^2 h^{2k+1}  + \visco {\approxConstant}_1^2 h^{2k}\right)\seminormH[k+1]{u}{\stDom}^2 + \elmsum \glspar_K \left[ {\approxConstant}_1 \normL[2]{\advst}{K} \he + \visco{\approxConstant}_{2} \right]^2 \he^{2(k-1)} \seminormH[k+1]{u}{K}^2
	\end{align*}
	Now, $\glspar_K$ is given by (see \eqref{eq:glspar-formula})
	\begin{align*}
		\glspar_K = \left[ \frac{c_1 \normL[2]{\advst}{K}}{\he}  + \frac{c_2 \visco}{\he^2} \right]^{-1} = \frac{\he^2}{c_1 \normL[2]{\advst}{K} \he + c_2 \visco}
	\end{align*}
	So, we have
	\begin{align*}
		\glspar_K \left[ {\approxConstant}_1 \normL[2]{\advst}{K} \he + \visco{\approxConstant}_{2} \right]^2 = \he^2 \frac{\left[ {\approxConstant}_1 \normL[2]{\advst}{K} \he + \visco{\approxConstant}_{2} \right]^2}{c_1 \normL[2]{\advst}{K} \he + c_2 \visco} \leq C \he^2.
	\end{align*}
	Substituting,
	\begin{align*}
		\normVh{\ei}^2 &\leq \left({\approxConstant}_{\Gamma}^2 h^{2k+1}  + {\approxConstant}_1^2 h^{2k}\right)\seminormH[k+1]{u}{\stDom}^2 + \elmsum C \he^{2k} \seminormH[k+1]{u}{K}^2 \\
		&\leq \left({\approxConstant}_{\Gamma}^2 h^{2k+1}  + {\approxConstant}_1^2 h^{2k} + Ch^{2k}\right)\seminormH[k+1]{u}{\stDom}^2 \\
		&\leq \left({\approxConstant}_{\Gamma}^2 h  + {\approxConstant}_1^2 + C\right) h^{2k} \seminormH[k+1]{u}{\stDom}^2 \\
		&\leq C^2 h^{2k} \seminormH[k+1]{u}{\stDom}^2.
	\end{align*}
	Thus, we have the required estimate.
\end{proof}

\subsection{\thmref{theorem:a-priori-estimate} (A priori error estimate)} \label{sec:proof-a-priori}
\begin{proof}[Proof of \thmref{theorem:a-priori-estimate}]
	Define $\er = \uh - u$, $\eh = \uh-\interpolant u$ and $\ei = \interpolant u - u$. These three quantities are related as
	\begin{align}
	\er &=  \uh - u \\ &= \uh - \interp{u} + \interp{u} - u \\ &= \eh + \ei
	\end{align}
	By triangle inequality
	\begin{align}
	\normVh{\er} \leq \normVh{\eh} + \normVh{\ei}
	\label{eq:errors-triangle-ineq}
	\end{align}
	From \lemref{lemma:interp-estimate-in-Vh}, the estimate on $\normVh{\ei}$ is already known, i.e.,
	\begin{align}
	\normVh{\ei}\leq C h^k|v|_{H^{k+1}(\stDom)}.
	\end{align}
	We can similarly show that
	\begin{align}
		\normVhstar{\ei}\leq C h^k|v|_{H^{k+1}(\stDom)}.
	\end{align}
	Now we try to estimate $\normVh{\eh}$ using $ \bformh(\eh, \eh) $. We have
    \begin{align*}
			\mu_c \normVh{\eh}^2 
			&\leq \bformh(\eh,\eh) = \bformh(\er-\ei,\eh) = \bformh(\er,\eh)-\bform(\ei,\eh) = -\bform(\ei,\eh) \leq |\bform(\ei,\eh)| \leq \mu_b \normVh{\ei} \normVhstar{\eh} \\
			&\leq \mu_b C h^{2k}\seminormH[k+1]{v}{\stDom}^2,
	\end{align*}
    i.e,
    \begin{align*}
        \normVh{\eh} &\leq C h^{k}\seminormH[k+1]{v}{\stDom}.
    \end{align*}
	Substituting in ~\eqref{eq:errors-triangle-ineq}, we get
	\begin{align}
	\normVh{\er} 
	&\leq \normVh{\eh} + \normVh{\ei}\\
	&\leq (C_1 + C_2)h^{k}\seminormH[k+1]{v}{\stDom}\\
	&\leq C h^{k}\seminormH[k+1]{v}{\stDom}.
	\end{align}
\end{proof}

\subsection{\thmref{theorem:a-posteriori-estimate} (A posteriori error estimate)} \label{sec:proof-a-posteriori}
\begin{proof}[Proof of \thmref{theorem:a-posteriori-estimate}]
	The proof below closely follows the arguments presented in Section 1.4 of \cite{verfurth2013posteriori}.
	We can rewrite ~\eqref{eq:gls-abstract-for-exact-solution} and \eqref{eq:exact-solution-discrete-form} for any $ v \in \spaceV $ as
	\begin{subequations}
		\begin{align}
		\innerh{\opLT u - F}{v + \glspar\opLT v} &= 0 \ \ \forall v \in \spaceV, \\
		\implies \bformh(u, v) &= \lformh(v) \ \ \forall v \in \spaceV.
		\end{align}
	\end{subequations}
	Let us denote the strong residual as $ R(u) = \opLT u - F $, and the weak residual as
	\begin{align}
	\innerh{R(\uh)}{v} &= \lformh(v) - \bformh(\uh,v) \\
	&= \bformh(u,v) - \bformh(\uh,v) \\
	&= \bformh(u-\uh,v).
	\end{align}
	So, we have
	\begin{align}
	\innerh{R(\uh)}{v} &= \lformh(v) - \bformh(\uh,v) \\
	&= \innerh{F}{v + \glspar \opLT v} - \left[ (\uht, v) + (\adiv\uh,  v) + (\visco\grad\uh,\grad v) +\innerh{\glspar \opLT \uh}{\opLT  v} \right] \\
	&= \elmsum \inner{F}{v + \glspar \opLT v}_K - \left[ (\uht, v)_K + (\adiv\uh,  v)_K + (\visco\grad\uh,\grad v)_K +\inner{\opLT \uh}{\glspar \opLT  v}_K \right] \\
	&= \elmsum \inner{F}{v + \glspar \opLT v}_K \\ &\qquad\qquad - \left[ (\uht, v)_K + (\adiv\uh,  v)_K - (\visco\laplacian\uh,v)_K + \Xsum_{E \in \partial K} \inner{\visco \ddiv{\normalvec}}{v} +\inner{\opLT \uh}{\glspar \opLT  v}_K \right] \\
	&= \elmsum \inner{F}{v + \glspar \opLT v}_K - \left[ \inner{\opLT \uh}{v}_K + \inner{\opLT \uh}{\glspar \opLT  v}_K + \Xsum_{E \in \partial K} \inner{\visco \ddiv{\normalvec}}{v} \right] \\
	&= \elmsum \inner{F - \opLT \uh}{v + \glspar \opLT v}_K - \edgesum \inner{\visco \jump(\ddiv{\normalvec}\uh)}{v}_E.
	\end{align}
	Define the elemental residual as $ r = (F - \opLT \uh) |_K $ and the jump on an edge as $ j = -(\visco \jump(\ddiv{\normalvec}\uh))|_E$. Then we have
	\begin{align} \label{eq:aposter-weak-res-v}
	\innerh{R(\uh)}{v} &= \elmsum \left[\inner{r}{v}_K + \inner{r}{\glspar \opLT v}_K\right] + \edgesum \inner{j}{v}_E \ \text{for any}\ v \in \spaceV
	\end{align}
	Now, by Galerkin orthogonality, we have
	\begin{align}
	\innerh{R(\uh)}{\vh} = 0 \ \text{for any}\ \vh \in \spaceVh.
	\end{align}
	So, using $ \vh \in \spaceVh \subset \spaceV $ in \eqref{eq:aposter-weak-res-v}, we have
	\begin{align} \label{eq:aposter-weak-res-vh}
	0 &= \elmsum \left[\inner{r}{\vh}_K + \inner{r}{\glspar \opLT \vh}_K\right] + \edgesum \inner{j}{\vh}_E \ \forall \vh \in \spaceVh.
	\end{align}
	Subtracting \eqref{eq:aposter-weak-res-vh} from \eqref{eq:aposter-weak-res-v}, we have for any $ v \in \spaceV $ and every $ \vh \in \spaceVh $,
	\begin{align} 
	\innerh{R(\uh)}{v} &= \elmsum \left[\inner{r}{v-\vh}_K + \inner{r}{\glspar \opLT (v-\vh)}_K\right] + \edgesum \inner{j}{(v-\vh)}_E \\
	&\leq \elmsum \normElm{r} \left(\normElm{v-\vh} + \glsparMax \normElm{\opLT (v-\vh)} \right) + \edgesum \norm{j}_E \norm{v-\vh}_E.
	\end{align}
	Now, choosing $ \vh = \interp{v} $ and using the estimates from \eqref{estimate:sobolev-interp}, we have
	\begin{align}
	\innerh{R(\uh)}{v} &\leq \elmsum \normElm{r} \left( C_{a0}\he^{k+1} + \glsparMax C_{a2} \he^{k-1} \right) \seminormH[k+1]{v}{K} + \edgesum \norm{j}_E C_{a \Gamma} \he^{k+\half}  \seminormH[k+1]{v}{E}.
	\end{align}
	Assuming $ \glsparMax = \gamma h^2,\ \gamma>0 $, (i.e., assuming a diffusive limit, see \remref{remark:stabilization}), we have
	\begin{align}
	\innerh{R(\uh)}{v} &\leq \elmsum \he^{k+1} \normElm{r} \left( C_{a0} + \gamma C_{a2} \right) \seminormH[k+1]{v}{K} + \edgesum \norm{j}_E C_{a \Gamma} h_E^{k+\half}  \seminormH[k+1]{v}{E}.
	\end{align}
	Using Cauchy-Schwarz inequality, we have
	\begin{align}
	\innerh{R(\uh)}{v} &\leq \ C \  \left[\elmsum \he^{2(k+1)} \normElm{r}^2 + 
	\edgesum h_E^{2k+1} \norm{j}_E^2 \right]^{\halfnice} \\
	&\qquad\qquad \times \left[\elmsum \seminormH[k+1]{v}{K}^2 + \edgesum \seminormH[k+1]{v}{E}^2 \right]^{\halfnice}.
	\end{align}
	The first term in the brackets on the right hand side can be bounded as
	\begin{align*}
	\left[\elmsum \he^{2(k+1)} \normElm{r}^2 + 
	\edgesum h_E^{2k+1} \norm{j}_E^2 \right]^{\halfnice} &\leq \left[\elmsum \left(\he^{2(k+1)} \normElm{r}^2 + 
	\Xsum_{E \in \mathcal{E}_K} h_E^{2k+1} \norm{j}_E^2\right) \right]^{\halfnice} \\
	&\leq \left[\elmsum h^{2k} \left(\he^{2} \normElm{r}^2 + 
	\Xsum_{E \in \mathcal{E}_K} h_E \norm{j}_E^2\right) \right]^{\halfnice} \\
	&\leq h^{k} \left(\elmsum {\etaElm}^2\right)^{\halfnice}, \\
	&= h^k \etaFull
	\end{align*}
	where $ h $ is defined as
	\begin{align}
	h = \max \left\{ \max_{K \in \mesh} h_K, \ \ \max_{E \in \edgeset} h_E  \right\}.
	\end{align}
	Also, due to shape regularity, we can write
	\begin{align}
	\left[\elmsum \seminormH[k+1]{v}{K}^2 + \edgesum \seminormH[k+1]{v}{E}^2 \right]^{\halfnice} \leq C \seminormH[k+1]{v}{\stDom}.
	\end{align}
	Thus, we have
	\begin{align}
	\innerh{R(\uh)}{v} 
	&\leq C \etaFull h^k \seminormH[k+1]{v}{\stDom}.
	\end{align}
	Now, using $ \innerh{R(\uh)}{v} = \bformh(u-\uh, v) $, we also have
	\begin{align}
	\innerh{R(\uh)}{v} &= \bformh(u-\uh, v) \\
	&=  \bformh(u-\uh, v) -  \bformh(u-\uh, \vh) \quad \text{(by Galerkin orthogonality)} \\
	&= \bformh(u-\uh, v-\vh) \\
	&\leq \mu_b \normVhstar{u-\uh}\normVh{v-\vh}.
	\end{align}
	Once again, choosing $ \vh = \interp{v} $, we have
	\begin{align}
	\innerh{R(\uh)}{v} &\leq \mu_b \normVhstar{u-\uh} C_{a0} h^k \seminormH[k+1]{v}{\stDom} \\
	\implies \oneOver{\mu_b C_{a0} h^k \seminormH[k+1]{v}{\stDom}} \innerh{R(\uh)}{v} &\leq \normVhstar{u-\uh}.
	\end{align}
	Using a similar argument as in \cite{verfurth2013posteriori} (Theorem 1.5), we can say
	\begin{align}
	\normVhstar{u-\uh} &= \sup_{v \in \spaceV \backslash \{0\}} \oneOver{\mu_b C_{a0} h^k \seminormH[k+1]{v}{\stDom}} \innerh{R(\uh)}{v} \\
	&\leq \oneOver{\mu_b C_{a0} h^k \seminormH[k+1]{v}{\stDom}} C \etaFull h^k \seminormH[k+1]{v}{\stDom} \\
	&\leq C\etaFull.
	\end{align}
\end{proof}

\section{Additional results}
\subsection{Study on condition numbers}
Below, we present a brief study on how the the condition number of the global coefficient matrix $\globalMatrix$ varies with respect to presence of the proposed stabilization and also with respect to the usage of a preconditioner in the linear algebra solution algorithm. The following results were obtained with the generalized minimal residual algorithm (GMRES) provided by \petsc{}. In the tables below, ``No PC'' means that no preconditioner was used with GMRES; whereas ASM refers to the fact that the additive Schwarz method was used with GMRES.

\begin{table}[]
	\centering
	\caption{Condition numbers for \textbf{\textit{heat equation}} with trilinear basis functions ($m=1$)}
	\label{tab:condition-heat-lin}
	\begin{tabular}{@{}c|cccc|cccc@{}}
		\toprule[2pt]
		\multirow{3}{*}{$h$} & \multicolumn{4}{c|}{$\nu = 10^{-2}$} & \multicolumn{4}{c}{$\nu = 10^{-6}$} \\ \cmidrule(l){2-9} 
		& \multicolumn{2}{c|}{No stabilization} & \multicolumn{2}{c|}{GLS stabilization} & \multicolumn{2}{c|}{No stabilization} & \multicolumn{2}{c}{GLS stabilization} \\ \cmidrule(l){2-9} 
		& \multicolumn{1}{c|}{No PC} & \multicolumn{1}{c|}{ASM} & \multicolumn{1}{c|}{No PC} & ASM & \multicolumn{1}{c|}{No PC} & \multicolumn{1}{c|}{ASM} & \multicolumn{1}{c|}{No PC} & ASM \\ \midrule[2pt]
		$\nicefrac{1}{8}$ & \multicolumn{1}{c|}{7.2} & \multicolumn{1}{c|}{$ 16.2 $ } & \multicolumn{1}{c|}{5.2} & 4.4 & \multicolumn{1}{c|}{10.7} & \multicolumn{1}{c|}{$ 3.0 \times 10^{7} $ } & \multicolumn{1}{c|}{7.8} & 6.4 \\ \midrule
		$\nicefrac{1}{16}$ & \multicolumn{1}{c|}{14.3} & \multicolumn{1}{c|}{$ 9.2 $ } & \multicolumn{1}{c|}{9.6} & 4.0 & \multicolumn{1}{c|}{20.9} & \multicolumn{1}{c|}{$ 7.9 \times 10^{5} $ } & \multicolumn{1}{c|}{15.0} & 5.5 \\ \midrule
		$\nicefrac{1}{32}$ & \multicolumn{1}{c|}{36.0} & \multicolumn{1}{c|}{$ 10.5 $ } & \multicolumn{1}{c|}{17.8} & 6.3 & \multicolumn{1}{c|}{41.3} & \multicolumn{1}{c|}{$ 5.5 \times 10^{8} $ } & \multicolumn{1}{c|}{29.4} & 5.6 \\ \bottomrule[2pt]
	\end{tabular}
\end{table}

\begin{table}[]
	\centering
	\caption{Condition numbers for \textbf{\textit{heat equation}} with triquadratic basis functions ($m=2$)}
	\label{tab:condition-heat-qua}
	\begin{tabular}{@{}c|cccc|cccc@{}}
		\toprule[2pt]
		\multirow{3}{*}{$h$} & \multicolumn{4}{c|}{$\nu = 10^{-2}$} & \multicolumn{4}{c}{$\nu = 10^{-6}$} \\ \cmidrule(l){2-9} 
		& \multicolumn{2}{c|}{No stabilization} & \multicolumn{2}{c|}{GLS stabilization} & \multicolumn{2}{c|}{No stabilization} & \multicolumn{2}{c}{GLS stabilization} \\ \cmidrule(l){2-9} 
		& \multicolumn{1}{c|}{No PC} & \multicolumn{1}{c|}{ASM} & \multicolumn{1}{c|}{No PC} & ASM & \multicolumn{1}{c|}{No PC} & \multicolumn{1}{c|}{ASM} & \multicolumn{1}{c|}{No PC} & ASM \\ \midrule[2pt]
		$\nicefrac{1}{8}$ & \multicolumn{1}{c|}{19.2} & \multicolumn{1}{c|}{$ 2.0 \times 10^{2} $ } & \multicolumn{1}{c|}{38.6} & 9.1 & \multicolumn{1}{c|}{196.4} & \multicolumn{1}{c|}{$ 5.0 \times 10^{5} $ } & \multicolumn{1}{c|}{379.3} & 17.4 \\ \midrule
		$\nicefrac{1}{16}$ & \multicolumn{1}{c|}{52.3} & \multicolumn{1}{c|}{$ 9.5 \times 10^{2} $ } & \multicolumn{1}{c|}{106.2} & 16.0 & \multicolumn{1}{c|}{477.2} & \multicolumn{1}{c|}{$ 2.5 \times 10^{5} $ } & \multicolumn{1}{c|}{914.1} & 33.7 \\ \midrule
		$\nicefrac{1}{32}$ & \multicolumn{1}{c|}{202.0} & \multicolumn{1}{c|}{$ 1.5 \times 10^{2} $ } & \multicolumn{1}{c|}{477.0} & 81.0 & \multicolumn{1}{c|}{524.2} & \multicolumn{1}{c|}{$ 1.2 \times 10^{9} $ } & \multicolumn{1}{c|}{$ 1.8 \times 10^{3} $} & 67.4 \\ \bottomrule[2pt]
	\end{tabular}
\end{table}

\begin{table}[]
	\centering
	\caption{Condition numbers for \textbf{\textit{advection-diffusion equation}} with trilinear basis functions ($m=1$)}
	\label{tab:condition-advdif-lin}
	\begin{tabular}{@{}c|cccc|cccc@{}}
		\toprule[2pt]
		\multirow{3}{*}{$h$} & \multicolumn{4}{c|}{$\nu = 10^{-2}$} & \multicolumn{4}{c}{$\nu = 10^{-6}$} \\ \cmidrule(l){2-9} 
		& \multicolumn{2}{c|}{No stabilization} & \multicolumn{2}{c|}{GLS stabilization} & \multicolumn{2}{c|}{No stabilization} & \multicolumn{2}{c}{GLS stabilization} \\ \cmidrule(l){2-9} 
		& \multicolumn{1}{c|}{No PC} & \multicolumn{1}{c|}{ASM} & \multicolumn{1}{c|}{No PC} & ASM & \multicolumn{1}{c|}{No PC} & \multicolumn{1}{c|}{ASM} & \multicolumn{1}{c|}{No PC} & ASM \\ \midrule[2pt]
		$\nicefrac{1}{8}$ & \multicolumn{1}{c|}{31.5} & \multicolumn{1}{c|}{$ 2.8 \times 10^{6} $ } & \multicolumn{1}{c|}{12.4} & 3.8 & \multicolumn{1}{c|}{116.4} & \multicolumn{1}{c|}{$ 8.7 \times 10^{8} $ } & \multicolumn{1}{c|}{15.3} & 4.8 \\ \midrule
		$\nicefrac{1}{16}$ & \multicolumn{1}{c|}{42.0} & \multicolumn{1}{c|}{$ 13.0 $ } & \multicolumn{1}{c|}{32.3} & 5.8 & \multicolumn{1}{c|}{523.3} & \multicolumn{1}{c|}{$ 4.8 \times 10^{9} $ } & \multicolumn{1}{c|}{43.3} & 7.8 \\ \midrule
		$\nicefrac{1}{32}$ & \multicolumn{1}{c|}{95.3} & \multicolumn{1}{c|}{$ 20.3 $ } & \multicolumn{1}{c|}{71.5} & 10.4 & \multicolumn{1}{c|}{783.6} & \multicolumn{1}{c|}{$ 1.1 \times 10^{11} $ } & \multicolumn{1}{c|}{96.1} & 14.0 \\ \bottomrule[2pt]
	\end{tabular}
\end{table}

\begin{table}[]
	\centering
	\caption{Condition numbers for \textbf{\textit{advection-diffusion equation}} with triquadratic basis functions ($m=2$)}
	\label{tab:condition-advdif-qua}
	\begin{tabular}{@{}c|cccc|cccc@{}}
		\toprule[2pt]
		\multirow{3}{*}{$h$} & \multicolumn{4}{c|}{$\nu = 10^{-2}$} & \multicolumn{4}{c}{$\nu = 10^{-6}$} \\ \cmidrule(l){2-9} 
		& \multicolumn{2}{c|}{No stabilization} & \multicolumn{2}{c|}{GLS stabilization} & \multicolumn{2}{c|}{No stabilization} & \multicolumn{2}{c}{GLS stabilization} \\ \cmidrule(l){2-9} 
		& \multicolumn{1}{c|}{No PC} & \multicolumn{1}{c|}{ASM} & \multicolumn{1}{c|}{No PC} & ASM & \multicolumn{1}{c|}{No PC} & \multicolumn{1}{c|}{ASM} & \multicolumn{1}{c|}{No PC} & ASM \\ \midrule[2pt]
		$\nicefrac{1}{8}$ & \multicolumn{1}{c|}{59.1} & \multicolumn{1}{c|}{$ 7.3 \times 10^{9} $ } & \multicolumn{1}{c|}{114.7} & 15.7 & \multicolumn{1}{c|}{311.8} & \multicolumn{1}{c|}{$ 1.0 \times 10^{9} $ } & \multicolumn{1}{c|}{153.7} & 23.7 \\ \midrule
		$\nicefrac{1}{16}$ & \multicolumn{1}{c|}{146.4} & \multicolumn{1}{c|}{$ 77.1 $ } & \multicolumn{1}{c|}{264.9} & 33.0 & \multicolumn{1}{c|}{955.6} & \multicolumn{1}{c|}{$ 1.4 \times 10^{8} $ } & \multicolumn{1}{c|}{334.7} & 48.6 \\ \midrule
		$\nicefrac{1}{32}$ & \multicolumn{1}{c|}{274.9} & \multicolumn{1}{c|}{$ 92.2 $ } & \multicolumn{1}{c|}{651.8} & 85.0 & \multicolumn{1}{c|}{609.7} & \multicolumn{1}{c|}{$ 7.6 \times 10^{11} $ } & \multicolumn{1}{c|}{738.3} & 100.6 \\ \bottomrule[2pt]
	\end{tabular}
\end{table}

\clearpage

\subsection{Convergence studies with higher order basis functions}
\label{sec:conv-study-higher-order}
\input{sections/7_convergence_new.tex}
\end{document}